\newtheorem {lemma}{Lemma}
\newtheorem {theorem} {Theorem}
\newtheorem {claim}{Claim}
\begin{document}

\title{Extremal $Q$-index problem in outerplanar graphs}

\author{Jin Cai\footnote{Email: jincai@m.scnu.edu.cn}, Leyou Xu\footnote{Email: leyouxu@m.scnu.edu.cn}, Bo Zhou\footnote{Email: zhoubo@scnu.edu.cn}\\
School of Mathematical Sciences, South China Normal University\\
Guangzhou 510631, P.R. China}

\date{}
\maketitle

\begin{abstract}
Outerplanar Tur\'an problem has received considerable attention recently. We study the spectral version via $Q$-index. We determine the unique graph that maximizes the $Q$-index among all $n$-vertex connected outerplanar graphs which are respectively forbidden to contain: (i) a fixed cycle; and (ii) the disjoint union of paths of a given order.\\ \\ 
{\bf Keywords:}
$Q$-index, outerplanar graph, outerplanar Tur\'an problem\\ \\
{AMS classification:} 05C50, 05C10
\end{abstract}

\section{Introduction}

For two vertex disjoint graphs $G$ and $H$, $G\cup H$ denotes the union of $G$ and $H$, and $G\vee H$ denotes the join of $G$ and $H$. For a positive integer $a$, $aG$ denotes the union of $a$  (vertex disjoint) copies of a graph $G$.   A complete graph on $n$ vertices is denoted by $K_n$.  A path  on $n$ vertices is denoted by $P_n$.  A cycle  on $n$ vertices is denoted by $C_n$, where $n\ge 3$.

Given a graph $F$, a graph $G$ is said to be $F$-free if $G$ does not contain $F$ (as a subgraph). The well known Tur\'an-type problem is to determine the maximum number of edges of an $n$-vertex $F$-free graph (as well as the $n$-vertex $F$-free graph(s) possessing exactly this number of edges). 
The classic Tur\'an theorem \cite{Tur} states that the balanced complete $r$-partite graph on $n$ vertices is the unique extremal graph maximizing the number of edges among all $n$-vertex $K_{r+1}$-free graphs. The research on Tur\'an-type problems is well summarized in the survey \cite{FS}.

In 2016, Dowden \cite{Dow} initiated the study of the Tur\'an-type problem for planar graphs. Given a planar graph $F$, this problem asks for  the maximum number of edges of an $n$-vertex $F$-free planar graph. Among others, there are many results for $F$ being cycles or paths, see the  survey \cite{LSS} (and reference therein) and subsequent papers, e.g., \cite{CLL,GGMPX,GVZ,SWY,SWY2}.

A graph $G$ is outerplanar if it has a planar embedding in which all vertices lie on the boundary of its outer face. Given an outerplanar graph $F$, one may want to know
the maximum number of edges of an $n$-vertex $F$-free outerplanar graph.  
Recently, Matolcsi and Nagy \cite{MN} determined this number for $F=P_3$, Gy\"ori  et al \cite{GPX} determined this number for $F=P_4, P_5$. In \cite{FZ}, Fang and Zhai determined this  number  for $F=C_\ell$ with $\ell\ge 3$ or $F=P_\ell$ with $\ell\ge 4$. 

Let $A(G)$ denote the adjacency matrix of a graph $G$. The spectral radius of $G$ is the largest eigenvalue of $A(G)$. In 1990, Cvetkovi\'c and Rowlinson \cite{CR} proposed  a conjecture on the maximum spectral radius of outerplanar graphs. Tait and Tobin \cite{TT} confirmed  the conjecture for graphs of sufficiently large order in 2017, and Lin and Ning \cite{LN} completely confirmed this conjecture in 2021. Recently, Sun et al. \cite{SWS} determined the maximum spectral radius of $n$-vertex $C_\ell$-free outerplanar graphs, and Yin and Li \cite{YL} considered the maximum spectral radius of $n$-vertex $F$-free outerplanar graphs when $F$ is either the graph consisting of  $t$-edge disjoint cycles $C_\ell$ with a common vertex or $F=(t+1)K_2$. 

The signless Laplacian matrix of a graph $G$ is the matrix $Q(G)=D(G)+A(G)$, where $D(G)$ is the degree diagonal matrix of $G$. The largest eigenvalue of $Q(G)$ is known as the $Q$-index of $G$, denoted by $q(G)$. Yu et al. \cite{YWG} considered the maximum $Q$-index among all $n$-vertex planar graphs. For outerplanar graphs, Yu et al. \cite{YGW} showed that $K_1\vee P_{n-1}$  is the unique graph that maximizes the $Q$-index.    
The study on the $Q$-index version of Tur\'an-type problem has attracted much attention, see, e.g.,  \cite{CJZ,CLZ,NY,ZW}.

Motivated by the above results, we focus on the $Q$-index version of Tur\'an-type problem on outerplanar graphs in this paper.

Since $K_1 \vee P_{n-1}$ maximizes the $Q$-index among all $n$-vertex outerplanar graphs, and $K_1 \vee P_{n-1}$ does not contain two disjoint cycles, it is natural to restrict our attention to graphs forbidding a single cycle of fixed length. Therefore, in the context of cycles, we study the $Q$-index extremal problem for outerplanar graphs that do not contain a given cycle $C_\ell$.

\begin{theorem}\label{cycle}
Let $G$ be a connected outerplanar $C_\ell$-free graph of order $n$ with $\ell\ge 3$. Then $q(G)\le q(K_1\vee (\alpha P_{\ell-2}\cup P_{r}))$ with equality if and only if $G\cong K_1\vee (\alpha P_{\ell-2}\cup P_{r})$, where $\alpha =\lfloor\frac{n-1}{\ell-2}\rfloor$ and $r=n-1-\alpha(\ell-2)$.
\end{theorem}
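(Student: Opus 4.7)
Let $G^*$ be a connected outerplanar $C_\ell$-free graph of order $n$ that attains the maximum $Q$-index, and let $x$ denote a Perron eigenvector of $Q(G^*)$ with $x_u = \max_v x_v$ attained at $u$. The proof proceeds in three stages: show that $u$ is a dominating vertex; identify $H := G^*-u$ as a linear forest with short components; and optimize over all such $H$.

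For the first stage, I would first note that $G^*$ is edge-maximal among outerplanar $C_\ell$-free graphs, since otherwise adding a legal edge would strictly increase $q$ by Rayleigh. Next, if some $v$ is non-adjacent to $u$, choose $v$ to be a non-cut vertex and consider the transformation $G' := G^* - \{vw : w \in N_{G^*}(v)\} + uv$ that attaches $v$ as a pendant to $u$; this preserves connectivity, outerplanarity, and $C_\ell$-freeness (only edges are deleted, and the new edge $uv$ is a bridge). A direct computation with the Rayleigh quotient yields
\[
q(G') - q(G^*) \;\ge\; \frac{(x_u + x_v)^2 - \sum_{w \in N_{G^*}(v)}(x_w + x_v)^2}{x^T x},
\]
which is nonnegative when $d_{G^*}(v) = 1$, and strict unless $x_u = x_w$ (leading to a contradiction). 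The case $d_{G^*}(v) \ge 2$ needs additional input, e.g., selecting $v$ to be a vertex of degree at most $2$ (which always exists in an outerplanar graph) and invoking edge-maximality, to force a contradiction and conclude $d_{G^*}(u) = n-1$.

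For the second stage, write $G^* = K_1 \vee H$. Outerplanarity forces $H$ to be a linear forest: a cycle in $H$ together with $u$ produces a $K_4$-minor, and a vertex of degree $\ge 3$ in $H$ produces, via $u$, a $K_{2,3}$-minor. Moreover, $K_1 \vee P_m$ contains $C_\ell$ iff $m \ge \ell-1$, so each component of $H$ has at most $\ell-2$ vertices. Hence $H = P_{a_1} \cup \cdots \cup P_{a_k}$ with $1 \le a_i \le \ell-2$ and $\sum_i a_i = n-1$. For the third stage, if two path-components $P_{a_i}, P_{a_j}$ with $a_i \le a_j < \ell-2$ were present, I would compare $K_1 \vee H$ with the graph obtained by replacing $P_{a_i} \cup P_{a_j}$ by $P_{a_i-1} \cup P_{a_j+1}$ (or by $P_{a_j+1}$ when $a_i = 1$); an eigenvector-perturbation argument focused on the endpoint being relocated should show $q$ strictly increases. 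Iterating drives the partition to the target $\alpha P_{\ell-2} \cup P_r$.

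\textbf{Main obstacle.} The first stage is the crux: a Kelmans-style edge transfer that would immediately make $u$ dominating typically does not preserve outerplanarity, so only milder pendant-attachment moves are available, and their spectral estimate is only sign-definite for degree-$1$ vertices. Closing the gap for vertices of higher degree seems to require a combinatorial selection exploiting both edge-maximality and the presence of a low-degree vertex in outerplanar graphs. The path-length comparison in the third stage is also delicate, because the Perron eigenvector changes when the partition changes, so careful bookkeeping is needed to obtain strict inequalities and pin down uniqueness.
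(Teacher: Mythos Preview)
Your identification of the three stages matches the paper's architecture exactly, and Stages~2 and~3 are essentially right: once a dominating vertex $u$ exists, outerplanarity forces $G^*[N(u)]$ to be a linear forest, $C_\ell$-freeness caps each component at $\ell-2$ vertices, and a ``shift one vertex between two paths'' comparison (the paper's Lemma~\ref{edgeshift}) finishes the optimization. The eigenvector-perturbation you allude to for Stage~3 does work, but note the proof is not a single inequality: one must split into cases according to whether the eigenvector entries along one path dominate those along the other, and in the mixed case perform a double edge swap rather than a single relocation.

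Stage~1, however, has a genuine gap, and the obstacle you name is real. Your pendant-attachment move $G' = G^* - \{vw : w\in N(v)\} + uv$ only yields a nonnegative Rayleigh difference when $d(v)=1$; for $d(v)=2$ the quantity $(x_u+x_v)^2 - (x_{w_1}+x_v)^2 - (x_{w_2}+x_v)^2$ can be negative, and there is no reason a vertex outside $N[u]$ of degree~$1$ should exist (edge-maximality tends to push degrees up, not down). Your proposed fix, to pick $v$ of degree at most~$2$, does not help: such a vertex exists in any outerplanar graph, but it may well lie in $N(u)$, and there is no mechanism forcing a low-degree vertex \emph{outside} $N[u]$.

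The paper sidesteps this entirely. Instead of attacking $\Delta(G^*)=n-1$ directly by edge moves, it first compares $q(G^*)$ to a concrete $F$-free outerplanar graph to obtain $q(G^*)>n$, and then invokes the analytic bound $q(G)\le \max_v \eta_G(v)$ where $\eta_G(v)=d(v)+\tfrac{1}{d(v)}\sum_{w\sim v}d(w)$. An edge-counting argument using outerplanarity shows $\eta_G(v)\le n$ whenever $d(v)\le n-4$, so $\Delta(G^*)\ge n-3$. Only then, with at most two vertices outside $N[u]$, does the paper perform edge moves; these are not Kelmans-style transfers but several tailored operations (Lemmas~\ref{edgemove2}, \ref{edgemove3}, \ref{edgemove}) each of which is shown to preserve outerplanarity and $F$-freeness in the specific configuration at hand. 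The key idea you are missing is this preliminary $\eta$-bound step that reduces the problem to a bounded number of stray vertices before any graph modification is attempted.
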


In the context of paths, we consider forbidding a single path and forbidding the disjoint union of $t$ paths of the same length. Interestingly, the extremal graphs for these two types of problems behave somewhat differently. Since any connected graph of order $n\ge 3$ contains $P_2$ and $P_3$, for the case of forbidding a single path, we restrict our attention to paths of order at least four.

\begin{theorem}\label{path}
Let $G$ be a connected outerplanar $tP_\ell$-free graph of order $n$ with $t\ge 1$. 

(i) If $t=1$, $\ell\ge 4$ and $n\ge \max\Big\{\lfloor\frac{\ell-3}{2} \rfloor^2+\lfloor\frac{\ell-3}{2} \rfloor+\ell-1,30\lfloor\frac{\ell-3}{2} \rfloor+30\sqrt{\lfloor\frac{\ell-3}{2} \rfloor\big(\lfloor\frac{\ell-3}{2} \rfloor+1\big)}\Big \}$, then $q(G)\le q\big (K_1\vee \big(P_{\lceil\frac{\ell-2}{2} \rceil}\cup\alpha P_{\lfloor\frac{\ell-2}{2} \rfloor}\cup P_r\big)\big)$ with equality if and only if $G\cong K_1\vee \big(P_{\lceil\frac{\ell-2}{2} \rceil}\cup\alpha P_{\lfloor\frac{\ell-2}{2} \rfloor}\cup P_r\big)$, where $\alpha=\left \lfloor \frac{n-\ell+1}{\lfloor\frac{\ell-2}{2} \rfloor}\right\rfloor+1$ and $r=n-\ell+1-(\alpha-1)\lfloor\frac{\ell-2}{2} \rfloor$.

(ii) If $t\ge 2$, $\ell\ge 2$ and $n\ge \max\Big\{\ell^2+(t-3)\ell+1,30(\ell-2)+30\sqrt{(\ell-2)(\ell-1)}\Big\}$, then $q(G)\le q(K_1\vee (P_{t\ell-\ell-1}\cup\alpha P_{\ell-1}\cup P_r))$ with equality if and only if $G\cong K_1\vee (P_{t\ell-\ell-1}\cup\alpha P_{\ell-1}\cup P_r)$, where $\alpha=\lfloor \frac{n-t+2}{\ell-1}\rfloor-t+1$ and $r=n-2\ell+2-(\alpha-1)(\ell-1)$.
\end{theorem}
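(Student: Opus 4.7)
The plan is to follow the standard spectral extremal framework: prove that an extremal graph must contain a universal vertex, so that together with outerplanarity the structure is forced to be $K_1\vee L$ for a linear forest $L$, and then optimize over admissible $L$. The first ingredient is a lower bound $q(G^\ast)\ge n+O_\ell(1)$ for the proposed extremal graph $G^\ast=K_1\vee L^\ast$, obtained by evaluating $q(G^\ast)$ as the largest root of the characteristic polynomial of a small quotient matrix coming from the equitable partition of $G^\ast$ (apex vertex; for each isomorphism class of path in $L^\ast$, internal vertices and endpoints form their own classes).

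Next, let $G$ be an extremal graph and $x$ be the Perron vector of $Q(G)$, with $u$ a vertex at which $x$ attains its maximum. From the eigenvalue equation
\[
q(G)\,x_u \;=\; d(u)\,x_u+\sum_{v\sim u} x_v
\]
and $x_v\le x_u$, we obtain $d(u)\ge q(G)/2\ge q(G^\ast)/2$. Refining this bound by evaluating the eigen-equation at non-neighbours of $u$ and invoking the hypothesis on $n$ yields $d(u)\ge n-c(\ell)$, and then a perturbation argument promotes this to $d(u)=n-1$: for a non-neighbour $w$ of $u$, one deletes a carefully chosen set of edges incident to $w$ and adds the edge $uw$, and shows via the Rayleigh quotient $x^{\top}Q(G)x/x^{\top}x$ that $q$ strictly increases while connectedness, outerplanarity and $tP_\ell$-freeness are preserved for large $n$. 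Since $K_1\vee L$ being outerplanar forces $L$ to be a linear forest (no vertex of degree $\ge 3$ and no cycle, else $K_4$ or $K_{2,3}$ appears as a minor), this pins $G=K_1\vee L$.

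The $tP_\ell$-free hypothesis now translates into an explicit constraint on the multiset of component orders $(a_i)$ of $L$, ordered decreasingly. For part (i) with $t=1$, the longest path of $K_1\vee L$ has $a_1+a_2+1$ vertices, giving the constraint $a_1+a_2\le \ell-2$. For part (ii), a short case analysis on whether a hypothetical packing of $t$ disjoint $P_\ell$'s uses the apex yields that $L$ has at most one component of order $\ge \ell-1$ which contributes more than $\lfloor a_1/\ell\rfloor$ copies of $P_\ell$, and that this long component has order at most $(t-1)\ell-1$. A rebalancing argument---replacing two components of sizes $(a,b)$ with $a>b+1$ by components of sizes $(a-1,b+1)$ strictly increases $q(K_1\vee L)$ whenever the constraint remains satisfied, as can be seen by a Perron-vector comparison on the two affected paths---then forces $L=L^\ast$ and gives uniqueness.

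The main obstacle I anticipate is the promotion $d(u)\ge n/2\to d(u)=n-1$. The $Q$-index is less amenable to local edge swaps than the ordinary spectral radius because the diagonal degree term of $Q$ co-varies with the adjacency term; moreover the $tP_\ell$-free property for $t\ge 2$ is not monotone under edge additions, so every candidate perturbation must be verified to stay inside the class. Resolving this cleanly will likely require a careful case analysis of the non-neighbours of $u$ according to the structure of the long paths they lie on, with the large-$n$ hypothesis used to rule out each obstruction. The explicit numerical thresholds appearing in the statement of the theorem presumably trace back to this step.
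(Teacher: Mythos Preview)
Your outline of the structural step (promoting $\Delta(G)$ to $n-1$ so that $G=K_1\vee L$ with $L$ a linear forest) is in line with the paper, but the optimization over $L$ contains a genuine error and a genuine gap. The error is the direction of the rebalancing: for the $Q$-index one has $q\big(K_1\vee(P_{a+1}\cup P_{b-1}\cup R)\big)>q\big(K_1\vee(P_a\cup P_b\cup R)\big)$ whenever $a\ge b\ge 1$, i.e.\ \emph{unbalancing} two path components increases $q$, not balancing. (This is the paper's Lemma~\ref{edgeshift}.) With the correct direction, the $tP_\ell$-free constraint $a_1+a_2\le t\ell-2$ is pushed to equality and one gets $a_2=\dots=a_{s-1}$, but this still leaves a one-parameter family of candidates indexed by $a_2\in\{1,\dots,\lfloor(t\ell-2)/2\rfloor\}$: the pair $(a_1,a_2)=(t\ell-2-a_2,\,a_2)$ is feasible for every such $a_2$, and Lemma~\ref{edgeshift} cannot compare two such configurations because it only shifts mass between two fixed components.

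The missing ingredient is the comparison across different values of $a_2$. The paper does this by a Perron-vector estimate $\tfrac{1}{q}<x_v<\tfrac{1}{q}+\tfrac{30}{q^2}$ for all $v\in N(u)$, and then, assuming $a_2$ is below the claimed value, exhibits an explicit modification $G'$ that deletes $a_2$ edges and adds $a_2+1$ edges (using that $s\ge a_2+3$, which needs the first lower bound on $n$) while staying $tP_\ell$-free; the Rayleigh quotient then gives $q(G')>q(G)$ precisely when $q^2-60a_2q-900a_2>0$, which is where the threshold $n\ge 30a_2+30\sqrt{a_2(a_2+1)}$ enters. So both numerical hypotheses in the theorem come from this final optimization step, not from the structural promotion $d(u)\to n-1$ as you conjectured. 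Your plan as written would not reach the extremal configuration without this additional comparison argument.
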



To prove the above results, we  establish a structural theorem for the extremal graphs. 

\begin{theorem}\label{str}
Let $F=C_\ell$ with $3\le \ell\le n$ or $F=tP_k$ with $k\ge 2$ and $4\le tk\le n-1$. If $G$ maximizes the $Q$-index among all connected $n$-vertex $F$-free outerplanar graphs, then $G$ contains a vertex $u$ of degree $n-1$ and $G[N(u)]$ consists of paths. 
\end{theorem}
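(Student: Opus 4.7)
The plan is to reduce the theorem to showing that the extremal graph $G$ has a vertex of degree $n-1$, and then deduce the linear-forest structure of $G - u^*$ directly from outerplanarity. Letting $x$ denote a positive unit Perron eigenvector of $Q(G)$, I would choose $u^* \in V(G)$ with $x_{u^*} = \max_{w \in V(G)} x_w$. If $d(u^*) = n-1$ then $G = K_1 \vee (G - u^*)$, and the outerplanarity of $G$ forces $G - u^*$ to be a linear forest: any cycle in $G - u^*$ would produce a wheel (hence a $K_4$-minor) in $G$, and any vertex of $G - u^*$ of degree at least three would, together with $u^*$, produce a $K_{2,3}$-minor.

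First I would establish a baseline lower bound on $q(G)$. I would exhibit a concrete connected $F$-free outerplanar graph $H = K_1 \vee L$ with $L$ a linear forest chosen to avoid $F$ (for $F = C_\ell$ take $L = \alpha P_{\ell-2} \cup P_r$; for $F = tP_k$ take the linear forest appearing in Theorem~\ref{path}). Since $H$ contains a vertex of degree $n-1$, Cauchy interlacing applied to the $1\times 1$ diagonal principal submatrix of $Q(H)$ gives $q(H) \ge n-1$, hence $q(G) \ge n-1$ by extremality. The eigen-equation $Q(G)x = q(G)x$ evaluated at $u^*$,
\[
q(G)\,x_{u^*} \;=\; d(u^*)\,x_{u^*} + \sum_{w \sim u^*} x_w \;\le\; 2\,d(u^*)\,x_{u^*},
\]
yields the crude lower bound $d(u^*) \ge (n-1)/2$.

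Next I would upgrade this to $d(u^*) = n - 1$ via a perturbation argument. Suppose for contradiction that some $v \ne u^*$ is non-adjacent to $u^*$. As $G$ is connected, pick a neighbor $w$ of $v$ lying on a shortest $v$-to-$u^*$ path and form $G' := G - vw + u^* v$. The Rayleigh identity
\[
x^{\top}\bigl(Q(G') - Q(G)\bigr)\,x \;=\; (x_{u^*}+x_v)^2 - (x_w+x_v)^2 \;\ge\; 0
\]
is strict whenever $x_{u^*} > x_w$, giving $q(G') > q(G)$ as soon as $G'$ is $F$-free and outerplanar, which would contradict the extremality of $G$. Verifying this certificate is the main obstacle: the insertion of $u^*v$ may create a $K_4$- or $K_{2,3}$-minor or a new copy of $F$, and one must show each such obstruction is destroyed either by the removal of $vw$ or by a further local edge deletion whose cost is absorbed by the Rayleigh gain. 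The case analysis is uniform across $F = C_\ell$ and $F = tP_k$, and possible ties $x_{u^*} = x_w$ can be broken by choosing $u^*$ among the Perron-maximum vertices to have maximum degree. This yields the contradiction $q(G') > q(G)$, whence $d(u^*) = n-1$ and the proof completes.
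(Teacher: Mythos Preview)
Your outline correctly isolates the two ingredients: once $\Delta(G)=n-1$ is known, the linear-forest structure of $G[N(u^*)]$ is immediate from outerplanarity (this is Lemma~\ref{obv}(i)). The gap is in your argument for $\Delta(G)=n-1$.

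You propose to take any $v\notin N[u^*]$, set $G'=G-vw+u^*v$, and use $x_{u^*}\ge x_w$ to conclude $q(G')>q(G)$. You then write that ``verifying this certificate is the main obstacle'' and assert that the needed case analysis ``is uniform across $F=C_\ell$ and $F=tP_k$'' and that any obstruction ``is destroyed either by the removal of $vw$ or by a further local edge deletion whose cost is absorbed by the Rayleigh gain''. This is exactly where the content of the proof lies, and you have not supplied it. A single swap of this form need not keep $G'$ outerplanar: the new edge $u^*v$ can create a $K_{2,3}$- or $K_4$-minor in which the deleted edge $vw$ plays no role. Nor need it keep $G'$ $F$-free: for $F=C_\ell$ the edge $u^*v$ may close a cycle of length exactly $\ell$ through $u^*$, and for $F=tP_k$ it may complete a $t$-th disjoint copy of $P_k$. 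Your preliminary estimate $d(u^*)\ge (n-1)/2$ is far too weak to control this: with up to $n/2$ vertices outside $N[u^*]$ the structure there is essentially arbitrary, so neither outerplanarity nor $F$-freeness of $G'$ can be read off, and the phrase ``absorbed by the Rayleigh gain'' has no quantitative content without such control.

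The paper's proof proceeds quite differently and explains why. It first uses the bound $q(G)\le\max_v\eta_G(v)$ of Lemma~\ref{qmu}, combined with the edge counts from Lemma~\ref{obv}, to force $\Delta(G)\ge n-3$; this reduction is the decisive step, because it leaves at most two vertices outside $N[u]$. Only then does the paper run a structural case analysis for $\Delta(G)=n-3$ and $\Delta(G)=n-2$, applying the purpose-built swap Lemmas~\ref{edgemove2}, \ref{edgemove3} and \ref{edgemove} (whose hypotheses are tailored to exactly these configurations) and, for each specific $F$, explicitly exhibiting how any copy of $F$ in the modified graph would already yield a copy of $F$ in $G$. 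Without first pinning $\Delta(G)$ to within $2$ of $n-1$, the verification you defer is not a routine check but essentially the whole theorem.
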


The paper is arranged as follows. In Section 2, we introduce the basic notations and preliminary lemmas that will be used throughout the paper. In Section 3, we prove Theorem \ref{str}. In Section 4, we prove Theorems \ref{cycle} and \ref{path}.

\section{Preliminaries}

All graphs considered here are simple and undirected. Let $G$ be a graph with vertex set $V(G)$ and edge set $E(G)$.
For any $v\in V(G)$, we denote by $N_G(v)$ the set of vertices adjacent to $v$ in $G$.
Let $N_G[v]=N_G(v)\cup \{v\}$ be the closed neighborhood of $v$ and $d_G(v)=|N_G(v)|$ the degree of $v$. We write $N(v)$ and $d(v)$ for $N_G(v)$ and $d_G(v)$ if there is no ambiguity. For $S\subseteq V(G)$, let $N_S(v)=N_G(v)\cap S$ and $d_S(v)=|N_{S}(v)|$.
For $S,T\subseteq V(G)$ with $S\cap T=\emptyset$, let $e(S,T)$ be the number of edges between $S$ and $T$, and $G[S]$ the subgraph of $G$ induced by $S$. We write $e(S)$ for $e(G[S])$.
For $S\subset E(G)$, let $G-S$ denote the graph obtained from $G$ by deleting edges in $S$. Particularly, if $S=\{e\}$, we write $G-e$ for $G-\{e\}$. Conversely, if $e\notin E(G)$, $G+e$ denotes the graph obtained from $G$ by adding an edge $e$. 

Let $K_{a,b}$ denote the complete bipartite graph with partite size $a$ and $b$.

A graph $H$ is a minor of a graph $G$ if $H$ can be obtained from a subgraph of $G$ by contracting edges. A graph $G$ is called $H$-minor free if $G$ does not contain $H$ as a minor.
It is well-known that an outerplanar graph is $K_{2,3}$-minor and $K_4$-minor free.
Based on this fact, we have the following observations. 

\begin{lemma}\label{obv}\cite{Bon}
Let $G$ be an outerplanar graph $G$ of order $n$. Then $e(G)\le 2n-3$. Moreover, for any  $u\in V(G)$, the following statements hold. \\
(i) $G[N(u)]$ consists of paths; \\
(ii) $|N(u)\cap N(v)|\le 2$ for any $v\in V(G)\setminus \{u\}$; \\
(iii) If $|N(u)\cap N(v)|=2$ for some $v\in V(G)\setminus N_G[u]$, then the two neighbors of $v$ are either adjacent or in different components of $G[N(u)]$. Moreover, if the two neighbors of $v$ lie in different components of $G[N(u)]$, then both of them are pendant vertices in $G[N(u)]$; \\
(iv) If $|N(u)\cap N(v_1)|=|N(u)\cap N(v_2)|=2$ for some $v_1,v_2\in V(G)\setminus N_G[u]$, then $(N(v_1)\cap N(u))\cap (N(v_2)\cap N(u))=\emptyset$.
\end{lemma}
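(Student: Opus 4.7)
The second conclusion --- that $G[N(u)]$ consists of paths --- follows from Lemma~\ref{obv}(i) once $u$ of degree $n-1$ is found, so the substance of the theorem is producing such a vertex.

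Let $G$ be an extremal graph in the class and let $x>0$ be the Perron eigenvector of $Q(G)$ corresponding to $q=q(G)$. Choose $u^\ast$ with $x_{u^\ast}=\max_{y\in V(G)}x_y$ and suppose, toward a contradiction, that $d(u^\ast)\le n-2$. By connectedness there is $v\in V(G)\setminus N[u^\ast]$ at distance $2$ from $u^\ast$, and I fix a common neighbor $w\in N(u^\ast)\cap N(v)$. The plan is to build a connected outerplanar $F$-free graph $G'$ with $q(G')>q(G)$, contradicting extremality.

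The main tool is a local Kelmans-type edge swap. For the single-edge swap $G':=G-vw+u^\ast v$, the Rayleigh principle gives
\[
q(G')-q(G)\;\ge\;\frac{(x_{u^\ast}+x_v)^{2}-(x_w+x_v)^{2}}{x^{\top}x}\;=\;\frac{(x_{u^\ast}-x_w)(x_{u^\ast}+x_w+2x_v)}{x^{\top}x},
\]
which is strictly positive whenever $x_{u^\ast}>x_w$; a standard preliminary argument (relocating $u^\ast$ within any Perron-tied class, otherwise forcing $G$ to be regular and comparing with $K_1\vee P_{n-1}$) rules out a Perron tie. A more aggressive alternative, the pendant replacement $G'':=G-\{vz:z\in N_G(v)\}+u^\ast v$, makes $v$ a leaf and is structurally cleaner for cycle conditions but requires extra work to sign the $q$-change and to handle the situation when $v$ is a cut vertex of $G$.

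The crux is choosing the swap so that $G'$ lies in the class. Connectedness holds automatically because $v$ stays joined to $u^\ast$ through the new edge. Outerplanarity requires that adding $u^\ast v$ create no $K_4$- or $K_{2,3}$-minor, and here Lemma~\ref{obv}(ii)--(iv) carries the burden: since $|N(u^\ast)\cap N(v)|\le 2$, only a short list of configurations can obstruct the swap, and a suitable choice of $w$ among the common neighbors (or, failing that, a compound swap deleting one additional well-chosen edge incident to $v$) eliminates each. For $F$-freeness, any copy of $F$ in $G'$ must use the new edge $u^\ast v$: for $F=C_\ell$ this gives a $u^\ast v$-path of length $\ell-1$ in $G-vw$, and I would choose $v,w$ so that no such path exists, or else iterate the swap along a shortest $u^\ast v$-path to reduce the distance of $v$ to $u^\ast$ progressively; for $F=tP_k$ a $tP_k$ in $G'$ using $u^\ast v$ decomposes across the new edge into two subpaths that can be rerouted in $G$ through the deleted edge $vw$ to produce a $tP_k$ inside $G$, where the hypothesis $4\le tk\le n-1$ supplies the room required for this reroute.

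The principal obstacle I foresee is exactly this combined verification: the modification must simultaneously raise $q$ strictly, preserve outerplanarity (using the structural restrictions of Lemma~\ref{obv}), and preserve $F$-freeness, with the last requirement splitting naturally into the cycle and path cases and likely demanding delicate local case analysis. Once a valid modification is produced in each case, the contradiction $q(G')>q(G)$ forces $d(u^\ast)=n-1$, and Lemma~\ref{obv}(i) then yields the path structure of $G[N(u^\ast)]$.
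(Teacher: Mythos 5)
Your proposal does not prove the statement at hand. The statement is Lemma~\ref{obv}: a deterministic, structural assertion about \emph{every} outerplanar graph $G$ and \emph{every} vertex $u$ --- the edge bound $e(G)\le 2n-3$ together with facts (i)--(iv) about $N(u)$ and common neighborhoods. What you have written is instead a sketch of Theorem~\ref{str} (that a $Q$-index-maximizing $F$-free outerplanar graph has a dominating vertex): you introduce an extremal graph, a Perron vector, Kelmans-type edge swaps, and $F$-freeness considerations, none of which appear in, or are relevant to, Lemma~\ref{obv}. Worse, your argument explicitly invokes ``Lemma~\ref{obv}(i)'' and ``Lemma~\ref{obv}(ii)--(iv)'' as known tools, so even if one tried to read it as a proof of the lemma it would be circular. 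The paper itself does not prove this lemma; it cites it to Bondy--Murty and derives it from the fact that outerplanar graphs are $K_4$-minor-free and $K_{2,3}$-minor-free.

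For the record, the intended argument is elementary and purely combinatorial. The bound $e(G)\le 2n-3$ is the standard edge count for (maximal) outerplanar graphs. For (i): a vertex of degree $\ge 3$ in $G[N(u)]$ together with $u$ yields a $K_{2,3}$, and a cycle in $G[N(u)]$ together with $u$ yields a wheel, hence a $K_4$-minor; so $G[N(u)]$ is a disjoint union of paths. For (ii): three common neighbors of $u$ and $v$ give a $K_{2,3}$. For (iii) and (iv): if the stated conclusions failed, one would again exhibit a $K_{2,3}$- or $K_4$-minor by contracting the relevant path segments of $G[N(u)]$. You should discard the spectral machinery entirely and argue along these lines.
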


For a square matrix $M$, denote by $\lambda(M)$ the largest eigenvalue of $M$.
By \cite[Corollary 2.2, p.~38]{Mi}, for nonnegative matrices $A$ and $C$  of order $n$,
if $A-C$ is nonnegative, nonzero and $A$ is irreducible, then $\lambda(A)>\lambda(C)$.
Applying this result to $Q(G)$ for a connected graph $G$ that is not complete, we have the following lemma.

\begin{lemma}\label{addedges}
Let $G$ be a connected graph  with $u,v\in V(G)$ and $uv\notin E(G)$.
Then $q(G+uv)>q(G)$.
\end{lemma}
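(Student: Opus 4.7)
The plan is to apply directly the Perron--Frobenius-type comparison result from \cite{Mi} quoted just before the lemma. Set $A := Q(G+uv)$ and $C := Q(G)$; both are symmetric and entrywise nonnegative matrices of the same order $n = |V(G)|$.

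First I would compute $A - C$. Adding the edge $uv$ raises the diagonal entries at positions $(u,u)$ and $(v,v)$ by $1$ each, and puts a $1$ in the off-diagonal positions $(u,v)$ and $(v,u)$; all other entries of $Q$ are unchanged. Hence $A - C$ is a nonnegative matrix with exactly four nonzero entries, so it is nonnegative and nonzero.

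Next I would argue that $A$ is irreducible. Since $G$ is connected, so is $G+uv$; irreducibility of $Q(G+uv)$ follows from irreducibility of $A(G+uv)$, which in turn is equivalent to connectedness of the underlying graph. (In terms of the directed graph associated to $A$, the off-diagonal pattern of $Q(G+uv)$ coincides with that of $A(G+uv)$, which is strongly connected because $G+uv$ is a connected simple graph.)

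With both hypotheses verified, the cited corollary yields $\lambda(A) > \lambda(C)$, i.e., $q(G+uv) > q(G)$. No real obstacle is expected here; the only mild point worth spelling out is the identification of irreducibility of $Q$ with connectedness of $G$, which I would state in one line.
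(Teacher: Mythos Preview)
Your argument is correct and is exactly the approach the paper takes: it applies the quoted Perron--Frobenius comparison from \cite{Mi} with $A=Q(G+uv)$ and $C=Q(G)$, noting that $A-C$ is nonnegative and nonzero and that $A$ is irreducible because $G+uv$ is connected. The paper in fact states only this application in the sentence preceding the lemma, so your write-up is, if anything, more detailed than the original.
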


\begin{lemma}\label{Delta}\cite{CS}
Let $G$ be a graph with maximum degree $\Delta(G)\ge 1$. Then $q(G)\ge \Delta(G)+1$ with equality if and only if $G\cong K_{1,n-1}$.
\end{lemma}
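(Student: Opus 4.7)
My plan is to lower-bound $q(G)$ by the $Q$-index of a star subgraph using a Rayleigh quotient argument with a suitably extended Perron vector of $K_{1,\Delta}$. First I would pin down $q(K_{1,\Delta})=\Delta+1$: writing the signless Laplacian eigenvalue equation with one unknown at the center and one at each leaf yields the Perron vector $y_{c}=\Delta$, $y_{\ell}=1$ with eigenvalue $\Delta+1$. This is the target value I want to transport into $G$.

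For the inequality, let $v\in V(G)$ satisfy $d(v)=\Delta$, and define a test vector $x\in\mathbb{R}^{V(G)}$ by $x_v=\Delta$, $x_u=1$ for $u\in N(v)$, and $x_u=0$ otherwise. Using the standard identity $x^\top Q(G)x=\sum_{ab\in E(G)}(x_a+x_b)^2$, the $\Delta$ edges from $v$ to $N(v)$ contribute $\Delta(\Delta+1)^2$, any edge inside $N(v)$ contributes $4$, any edge between $N(v)$ and $V(G)\setminus N[v]$ contributes $1$, and edges with both endpoints outside $N[v]$ contribute $0$. Since $\|x\|_{2}^{2}=\Delta^2+\Delta=\Delta(\Delta+1)$, the Rayleigh quotient is at least $\Delta(\Delta+1)^2/[\Delta(\Delta+1)]=\Delta+1$ plus nonnegative contributions, so $q(G)\ge\Delta+1$.

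For the equality case I would invoke the Rayleigh characterization: $q(G)=\Delta+1$ forces $x$ to be an eigenvector of $Q(G)$ for $\Delta+1$. Expanding $(Q(G)x)_{u}=(\Delta+1)x_{u}$ componentwise gives, at every leaf $\ell\in N(v)$, the condition $d(\ell)=1$ together with $\ell$ having no neighbor outside $N[v]$; and at every $w\in V(G)\setminus N[v]$, the condition $\sum_{u\sim w}x_u=0$, which by nonnegativity of $x$ means $w$ has no neighbor in $N[v]$. Hence $N[v]$ induces $K_{1,\Delta}$ and no edge leaves $N[v]$. Under the (implicit) connectedness hypothesis in which the lemma is applied throughout the paper, this forces $V(G)=N[v]$ and therefore $G\cong K_{1,\Delta}=K_{1,n-1}$.

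The main obstacle is not any single calculation but being exhaustive in the equality analysis: one must simultaneously rule out extra edges inside $N(v)$, edges from $N(v)$ to the outside, and any nontrivial external structure. The three vertexwise eigenvalue equations—combined with $x\ge 0$ and connectedness—do exactly this, so the argument stays short and self-contained.
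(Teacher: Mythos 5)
The paper does not prove this lemma at all: it is quoted from the reference \cite{CS} (Cvetkovi\'c--Simi\'c), so there is no in-paper argument to compare against. Your self-contained proof is correct. The Rayleigh computation is right: with $x_v=\Delta$, $x_u=1$ on $N(v)$ and $0$ elsewhere, the identity $x^\top Q(G)x=\sum_{ab\in E(G)}(x_a+x_b)^2$ gives a quotient of at least $\Delta+1$, and the equality analysis via the componentwise eigenequation (forcing $d(\ell)=1$ for each $\ell\in N(v)$ and no edges leaving $N[v]$) is sound. The one caveat you already flag correctly: as literally stated the equality characterization needs $G$ to be connected, since e.g.\ $K_{1,2}\cup K_2$ attains $q(G)=\Delta(G)+1$ without being a star; in this paper the lemma is only ever invoked for connected graphs, so the implicit hypothesis is harmless. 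One could shorten the inequality part by noting that $K_{1,\Delta}$ is a subgraph of $G$ and that $q$ is monotone under taking subgraphs (in the spirit of Lemma~\ref{addedges} and the Minc citation), but that monotonicity does not by itself deliver the equality case, whereas your eigenvector argument does, so your route is the more complete one.
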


For vertex $u\in V(G)$, let \[
\eta_G(u)=d_G(u)+\frac{1}{d_G(u)}\sum_{v\in N_G(u)}d_G(v).
\]
We write $\eta(u)$ for $\eta_G(u)$ if there is no ambiguity.

\begin{lemma}\label{qmu}\cite{FG,OddH} Let $G$ be a graph without isolated vertices. Then $q(G)\le \max\{\eta_G(u):u\in V(G)\}$.
\end{lemma}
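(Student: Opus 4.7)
The plan is to conjugate the signless Laplacian $Q(G)=D(G)+A(G)$ by the degree diagonal matrix $D=D(G)$ itself and then apply the classical Perron--Frobenius bound that the spectral radius of a nonnegative matrix is at most its maximum row sum.

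First, because $G$ has no isolated vertices, every diagonal entry of $D$ is positive, so $D^{-1}$ exists. Set $M := D^{-1} Q(G)\, D$. A one-line computation gives
\[
M_{uv} = \begin{cases} d(u) & \text{if } u = v, \\[2pt] d(v)/d(u) & \text{if } uv \in E(G), \\[2pt] 0 & \text{otherwise,} \end{cases}
\]
so $M$ is entrywise nonnegative. Because $M$ is similar to $Q(G)$, the two matrices share the same spectrum; in particular the largest eigenvalue of $M$ equals $q(G)$, and since $Q(G)$ is positive semidefinite this eigenvalue is also the spectral radius of $M$.

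Next, I would read off the $u$-th row sum of $M$, which is
\[
d(u) + \sum_{v \in N(u)} \frac{d(v)}{d(u)} = d(u) + \frac{1}{d(u)}\sum_{v \in N(u)} d(v) = \eta_G(u).
\]
The standard bound $\rho(M) \le \max_u \sum_v M_{uv}$ for nonnegative matrices then yields
\[
q(G) \;=\; \lambda(M) \;\le\; \max_{u \in V(G)} \eta_G(u),
\]
which is exactly the claimed inequality.

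The whole argument is really a single observation: conjugating $Q$ by $D$ converts the eigenvalue estimate into an elementary row-sum computation. There is no substantive obstacle — the only point requiring care is noting that the hypothesis ``no isolated vertices'' is precisely what guarantees $D$ is invertible and hence makes the similarity transformation legitimate.
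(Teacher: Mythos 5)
Your proof is correct. The paper does not prove this lemma at all --- it is quoted from the references \cite{FG,OddD} (Feng--Yu and Oliveira et al.) --- and your argument, conjugating $Q(G)$ by $D(G)$ and bounding the spectral radius of the resulting nonnegative matrix $D^{-1}QD$ by its maximum row sum, is precisely the standard proof of this bound (the signless Laplacian analogue of Merris's bound for the Laplacian); the hypothesis of no isolated vertices is used exactly where you say, to make $D$ invertible, and the row-sum identification with $\eta_G(u)$ is computed correctly.
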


For a connected graph $G$, $Q(G)$ is irreducible, and hence by Perron-Frobenius theorem, it has a unique unit positive eigenvector $\mathbf{x}$ corresponding to $q(G)$, called the Perron vector of $G$.

\begin{lemma}\label{perron}\cite{CRS}
Let $G$ be a connected graph with $u,v,w\in V(G)$ such that $uw\notin E(G)$ and $vw\in E(G)$. If the Perron vector $\mathbf{x}$ satisfies that $x_u\ge x_v$, then $q(G-vw+uw)>q(G)$.
\end{lemma}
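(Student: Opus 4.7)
The second conclusion, that $G[N_G(u)]$ is a disjoint union of paths, is immediate from Lemma~\ref{obv}(i), which holds at every vertex of every outerplanar graph. So the content of Theorem~\ref{str} reduces to showing that $G$ has a vertex of degree $n-1$.

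The plan is to argue by contradiction: assume $\Delta(G) \le n-2$, derive a sharp upper bound on $q(G)$ via Lemma~\ref{qmu}, and compare against an explicit $F$-free outerplanar graph with a dominating vertex. For the upper bound, observe that for any vertex $v$ of degree $d$ in $G$,
\[
\sum_{w \in N_G(v)} d_G(w) \;=\; d + 2\,e(N_G(v)) + e\bigl(N_G(v),\,V(G)\setminus N_G[v]\bigr).
\]
Applying Lemma~\ref{obv} --- specifically $e(N_G(v)) \le d - 1$ since $G[N(v)]$ is a union of paths by (i), together with the common-neighbour estimate $e(N_G(v),\,V(G)\setminus N_G[v]) \le 2(n-1-d)$ from (ii) --- gives
\[
\eta_G(v) \;\le\; d + 1 + \frac{2n-4}{d}.
\]
Optimising this over $1 \le d \le n-2$, and using the trivial bound $\eta_G(v) \le d(v) + \Delta(G)$ separately at very small $d(v)$, yields $q(G) \le n + C$ for an explicit modest constant $C$.

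For the comparison, I would take the candidate extremal graph suggested by Theorems~\ref{cycle} and~\ref{path}: $G^{\ast} = K_1 \vee H$, where $H$ is a disjoint union of paths on $n-1$ vertices arranged to make $G^{\ast}$ $F$-free. For $F = C_\ell$, take every path-component of $H$ of order at most $\ell-2$, so every cycle through the apex has length at most $\ell-1$; for $F = tP_k$, arrange the components so that $G^{\ast}$ contains at most $t-1$ vertex-disjoint copies of $P_k$. Each such $G^{\ast}$ is outerplanar, $F$-free, has $\Delta(G^{\ast}) = n-1$, and a quotient-matrix argument from the natural equitable partition of $K_1 \vee H$ (or, more crudely, a Rayleigh-quotient computation against a two-parameter ansatz) produces $q(G^{\ast}) > n + C$, contradicting the $q$-maximality of $G$.

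The main obstacle is closing the constants tightly enough: both bounds live in an interval of width $O(1)$ around $n$, so they must be matched explicitly rather than merely asymptotically. The upper bound is sharp only when several of the outerplanar inequalities are simultaneously tight, and to eliminate the resulting near-extremal configurations I expect to need the finer constraints in Lemma~\ref{obv}(iii)--(iv), which further restrict when two external vertices can share two common neighbours. The most delicate regime will be small $\ell$, especially $\ell = 3$, where $G^{\ast}$ degenerates to the star and the gap narrows; for this case a direct injection of $C_\ell$-freeness (e.g.\ triangle-freeness, forcing $e(N(v)) = 0$) should tighten the upper bound enough to recover $q(G) < q(G^{\ast})$. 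Once $d_G(u) = n-1$ is secured, Lemma~\ref{obv}(i) applied at $u$ immediately gives the path structure of $G[N(u)]$, completing the proof.
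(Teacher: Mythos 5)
Your proposal does not address the statement at hand. Lemma~\ref{perron} is a local spectral perturbation lemma: given a connected graph $G$ with $uw\notin E(G)$, $vw\in E(G)$, and a Perron vector $\mathbf{x}$ of $Q(G)$ satisfying $x_u\ge x_v$, one must show $q(G-vw+uw)>q(G)$. What you have written instead is a proof sketch for Theorem~\ref{str} (the structural theorem asserting the existence of a dominating vertex in the extremal graph). Nothing in your text engages with the Perron vector hypothesis $x_u\ge x_v$, the edge relocation $-vw+uw$, or the strict spectral inequality that the lemma asserts; so as a proof of Lemma~\ref{perron} it is vacuous.

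For the record, the intended argument (this is a classical result, cited by the paper from Cvetkovi\'c--Rowlinson--Simi\'c) runs as follows. Write $G'=G-vw+uw$ and let $\mathbf{x}$ be the unit Perron vector of $Q(G)$. By the Rayleigh principle,
\[
q(G')-q(G)\;\ge\;\mathbf{x}^\top\bigl(Q(G')-Q(G)\bigr)\mathbf{x}\;=\;(x_u+x_w)^2-(x_v+x_w)^2\;=\;(x_u-x_v)(x_u+x_v+2x_w)\;\ge\;0,
\]
since $x_u\ge x_v$ and $\mathbf{x}>0$. If equality held throughout, then $\mathbf{x}$ would also be a Perron vector of $Q(G')$; comparing the eigenvalue equations $q\mathbf{x}=Q(G)\mathbf{x}$ and $q\mathbf{x}=Q(G')\mathbf{x}$ at the vertex $w$ gives $x_v=x_u$ and then comparison at $u$ and at $v$ forces a contradiction (e.g.\ the row of $Q(G')$ at $u$ acquires the extra term $x_u+x_w>0$). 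Hence the inequality is strict. Note also that your sketch of Theorem~\ref{str}, taken on its own terms, is weaker than the paper's actual argument: a bound of the form $q(G)\le n+C$ with $C>0$ does not contradict $q(G^*)>n$, so the contradiction you describe does not close; the paper instead proves the sharp bound $\eta(u)\le n$ for all low-degree vertices and then handles $\Delta(G)\in\{n-3,n-2\}$ by explicit edge-switching arguments using Lemmas~\ref{edgemove2}--\ref{edgemove}.
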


\begin{lemma}\label{edgemove2}
Let $G$ be a graph with vertex $u$. Let $v\in N(u)$ with $d(v)\le d(u)-2$ and $w\in V(G)\setminus N[u]$ be a neighbor of $v$.
If $|N(v)\setminus N[u]|=1,2$, and  $N(z)\setminus \{v\}\subseteq N(v)\setminus N(u)$ for any $z\in N(v)\setminus N[u]$, then $q(G-vw+uw)>q(G)$.
\end{lemma}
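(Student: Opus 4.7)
The plan is to reduce to Lemma \ref{perron}: once we know that the Perron vector $\mathbf{x}$ of $Q(G)$ satisfies $x_u \ge x_v$, the conclusion $q(G-vw+uw) > q(G)$ follows immediately, since $uw \notin E(G)$ (as $w \notin N[u]$) and $vw \in E(G)$ (as $w \in N(v)$). We work under the implicit assumption that $G$ is connected, which is what makes $\mathbf{x}$ strictly positive and allows us to invoke Lemma \ref{perron}.

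To compare $x_u$ with $x_v$, I will subtract the two eigenvalue equations of $Q(G)$ at $u$ and at $v$. Write $N(u) = \{v\} \cup S \cup T$ and $N(v) = \{u\} \cup S \cup B$ with $S = N(u) \cap N(v)$, $T = N(u) \setminus N[v]$, and $B = N(v) \setminus N[u]$; the degree gap $d(v) \le d(u) - 2$ forces $|T| \ge |B| + 2$, so in particular $|T| \ge 3$. After the common $S$-contribution cancels and the term $d(u) x_u - d(v) x_v$ is split as $d(u)(x_u - x_v) + (d(u) - d(v)) x_v$, one arrives at
\[
(q + 1 - d(u))(x_u - x_v) = (d(u) - d(v)) x_v + \sum_{y \in T} x_y - \sum_{y \in B} x_y,
\]
where $q = q(G)$. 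Lemma \ref{Delta} gives $q \ge \Delta(G) + 1 \ge d(u) + 1$, so the coefficient on the left is at least $2$ and positive. Since $d(u) - d(v) \ge 2$ and $\sum_{y \in T} x_y > 0$ by Perron positivity, it suffices to establish the single inequality $\sum_{y \in B} x_y \le 2 x_v$.

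Here is where the structural hypothesis enters. For each $z \in B$ we have $N(z) \setminus \{v\} \subseteq N(v) \setminus N(u)$, and since $z \notin N(u)$ implies $u \notin N(z)$, the inclusion collapses to $N(z) \setminus \{v\} \subseteq B$. Combined with $|B| \in \{1,2\}$, only three configurations remain: (a) a single pendant $w$ with $N(w) = \{v\}$; (b) two non-adjacent pendants $w_1, w_2$ attached to $v$; or (c) a triangle $v w_1 w_2$ with $d(w_1) = d(w_2) = 2$. The local eigenvalue equations then pin down $\sum_B x_y$ explicitly in terms of $x_v$ and $q$: in (a), $x_w = \frac{x_v}{q-1}$; in (b), $\sum_B x_y = \frac{2 x_v}{q-1}$; in (c), summing $(q-2) x_{w_i} = x_v + x_{w_{3-i}}$ yields $\sum_B x_y = \frac{2 x_v}{q-3}$. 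Using $d(v) \ge 1$ and $d(u) \ge d(v) + 2 \ge 3$, so $q \ge 4$, the bound $\sum_B x_y \le 2 x_v$ holds in all three cases.

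The delicate point, and what I expect to be the main obstacle, is configuration (c) at the borderline value $q = 4$: there $\sum_B x_y = 2 x_v$ exactly, and the cushion $(d(u) - d(v)) x_v \ge 2 x_v$ is entirely absorbed. In that subcase one must fall back on the strictly positive term $\sum_{y \in T} x_y$, whose positivity is guaranteed by $|T| \ge 3$ together with $\mathbf{x} > 0$, to recover $x_u > x_v$. Once this borderline is handled, the proof closes by a direct appeal to Lemma \ref{perron}.
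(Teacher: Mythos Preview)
Your proposal is correct and follows essentially the same strategy as the paper: reduce to $x_u\ge x_v$ via Lemma~\ref{perron}, subtract the $Q$-eigenvector equations at $u$ and $v$, and exploit the structural hypothesis on $B=N(v)\setminus N[u]$ to control the $B$-contribution.

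The only difference is bookkeeping. The paper rearranges to
\[
(q-d(v)+1)(x_u-x_v)=(d(u)-d(v))x_u+\sum_{z\in N(u)\setminus N[v]}x_z-\sum_{z\in B}x_z,
\]
first proves $x_u>x_{w}$ (resp.\ $x_u>x_{w_i}$) by a separate comparison, and then uses $(d(u)-d(v))x_u\ge |B|\,x_u$ to absorb the $B$-terms. You instead isolate $(q+1-d(u))(x_u-x_v)=(d(u)-d(v))x_v+\sum_T x_y-\sum_B x_y$ and compute $\sum_B x_y$ \emph{directly} in terms of $x_v$ from the local equations at the $w_i$. Your route avoids the auxiliary step $x_u>x_{w_i}$; the paper's route avoids the case split on $q$ near the borderline. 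One small remark: in your case~(c) the borderline $q=4$ in fact cannot occur, since $|B|=2$ forces $d(v)\ge 3$, hence $d(u)\ge 5$ and $q\ge 6$; your fallback to $\sum_T x_y>0$ is therefore not needed, though it does no harm.
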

\begin{proof}
Let $q=q(G)$ and $\mathbf{x}$ be the Perron vector of $G$. By Lemma \ref{perron}, it suffices to show that $x_u\ge x_v$.

By Lemma \ref{Delta}, we have $q\ge \Delta(G)+1>d(v)-1$. 

Suppose first that $|N(v)\setminus N[u]|=1$, i.e., $N(v)\setminus N[u]=\{w\}$.  As  
$N(w)\setminus \{v\}\subseteq \{w\}$, we have $d(w)=1$.  By $q\mathbf{x}=Q(G)\mathbf{x}$ at $u$ and $w$, we have
\begin{align*}
qx_u &=d(u)x_u+x_v+\sum_{z\in N(u)\setminus \{v\}}x_z,\\
qx_w &=x_w+x_v.
\end{align*}
Then 
\[
(q-1)(x_u-x_{w})=(d(u)-1)x_u+\sum_{z\in N(u)\setminus N(w)}x_z>0,
\]
so $x_u>x_w$. 
By $q\mathbf{x}=Q(G)\mathbf{x}$ at $u$ and $v$, we have 
\[
q(x_u-x_v) =d(u)x_u+x_v+\sum_{z\in N(u)\setminus \{v\}}x_z-\left(d(v)x_v+x_u+x_w+\sum_{z\in N(v)\setminus \{u,w\}}x_z\right),
\]
so 
\[
(q-d(v)+1)(x_u-x_v)=(d(u)-d(v))x_u+\sum_{z\in N(u)\setminus N[v]}x_z-x_{w}\ge x_u-x_w>0
\]
implying that $x_u\ge x_v$.

Suppose next that $|N(v)\setminus N[u]|=2$ with $N(v)\setminus N[u]=\{w,w'\}$ and $ww'\notin E(G)$. 
As  
$N(w)\setminus \{v\}\subseteq \{w,w'\}$, we have $d(w)=d(w')=1$. 
As above, we have   $x_u>x_{w}$, $x_u>x_{w'}$ and 
\begin{align*}
(q-d(v)+1)(x_u-x_v)&=(d(u)-d(v))x_u+\sum_{z\in N(u)\setminus N[v]}x_z-x_{w}-x_{w'}\\
& \ge 2x_u-x_w-x_{w'}>0,
\end{align*}
so $x_u\ge x_v$.

Now suppose next that $|N(v)\setminus N[u]|=2$ with $N(v)\setminus N[u]=\{w,w'\}$ and $ww'\in E(G)$. Then $d(w)=d(w')=2$. By $q\mathbf{x}=Q(G)\mathbf{x}$ at $u$, $w$ and $w'$, we have $x_w=x_{w'}$ and 
\[
(q-3)(x_u-x_{w})=(d(u)-3)x_u+\sum_{z\in N(u)\setminus N(w)}x_z>0,
\]
so $x_u>x_{w}$. By $q\mathbf{x}=Q(G)\mathbf{x}$ at $u$ and $v$, we have
\[
(q-d(v)+1)(x_u-x_v)=(d(u)-d(v))x_u+\sum_{z\in N(u)\setminus N[v]}x_z-2x_{w}\ge 0,
\]
so  $x_u\ge x_v$. 
\end{proof}

\begin{lemma}\label{edgemove3}
Let $u$ be a vertex of a graph $G$. Let $w_1,w_2\in V(G)\setminus N[u]$ with $N(w_2)\setminus \{w_1\}\subseteq N(u)$.
If $N(w_1)=\{w_2\}$ and $d(u)\ge d(w_2)+1$, then $q(G-w_1w_2+uw_1)>q(G)$.
\end{lemma}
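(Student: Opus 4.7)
My plan mirrors the proof of Lemma \ref{edgemove2}: I will apply Lemma \ref{perron} with the roles $(u,v,w)=(u,w_2,w_1)$. Since $uw_1\notin E(G)$ (because $w_1\notin N[u]$) and $w_2w_1\in E(G)$, the conclusion reduces to establishing $x_u\ge x_{w_2}$ for the Perron vector $\mathbf{x}$ of $G$.

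Two structural features drive the computation. First, $N(w_1)=\{w_2\}$ makes $w_1$ a pendant, so the eigenvector equation at $w_1$ yields $(q-1)x_{w_1}=x_{w_2}$, allowing $x_{w_1}$ to be expressed as $x_{w_2}/(q-1)$. Second, the inclusion $N(w_2)\setminus\{w_1\}\subseteq N(u)$ together with $w_1\notin N(u)$ identifies $N(u)\cap N(w_2)$ with $N(w_2)\setminus\{w_1\}$, so that $\sum_{z\in N(u)}x_z$ decomposes as $\sum_{z\in N(w_2)\setminus\{w_1\}}x_z+\sum_{z\in N(u)\setminus N(w_2)}x_z$, where the latter index set has $d(u)-d(w_2)+1\ge 2$ elements.

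Using these, I subtract the eigenvector equations at $u$ and $w_2$, eliminate $x_{w_1}$ via the pendant identity, and split $d(u)x_u-d(w_2)x_{w_2}$ as $d(u)(x_u-x_{w_2})+(d(u)-d(w_2))x_{w_2}$. A short rearrangement produces
\[
(q-d(u))(x_u-x_{w_2})=\left(d(u)-d(w_2)-\frac{1}{q-1}\right)x_{w_2}+\sum_{z\in N(u)\setminus N(w_2)}x_z.
\]

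The only remaining task, and the mild obstacle, is checking that every factor on both sides has the correct sign. By Lemma \ref{Delta}, $q\ge \Delta(G)+1\ge d(u)+1$, so $q-d(u)\ge 1$ and $q-1\ge d(u)\ge d(w_2)+1\ge 2$; therefore $\tfrac{1}{q-1}\le\tfrac12<1\le d(u)-d(w_2)$, which makes the coefficient of $x_{w_2}$ strictly positive. The remaining sum is positive by the second observation, so $x_u>x_{w_2}$, and Lemma \ref{perron} delivers the desired strict inequality $q(G-w_1w_2+uw_1)>q(G)$.
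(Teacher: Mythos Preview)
Your proof is correct and follows essentially the same approach as the paper: reduce to $x_u\ge x_{w_2}$ via Lemma~\ref{perron}, use the pendant identity $(q-1)x_{w_1}=x_{w_2}$, and subtract the eigenvector equations at $u$ and $w_2$. The only difference is a cosmetic algebraic choice---you split $d(u)x_u-d(w_2)x_{w_2}$ so as to get the factor $(q-d(u))$ on the left and $x_{w_2}$ on the right, whereas the paper arranges for $(q-d(w_2)-\tfrac{1}{q-1})$ on the left and $x_u$ on the right; both versions make the sign analysis immediate.
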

\begin{proof}
Let $q=q(G)$ and $\mathbf{x}$ be the Perron vector of $G$. By Lemma \ref{perron}, it suffices to show that $x_u\ge x_{w_2}$.

By $q\mathbf{x}=Q(G)\mathbf{x}$ at $w_1$, we have
\[
(q-1)x_{w_1}=x_{w_2}.
\]
Again, by $q\mathbf{x}=Q(G)\mathbf{x}$ at $u$ and $w_2$, we have
\[
(q-d(w_2))(x_u-x_{w_2})=(d(u)-d(w_2))x_u+\sum_{z\in N(u)\setminus N(w_2)}x_z-x_{w_1}.
\]
Then
\[
\left(q-d(w_2)-\frac{1}{q-1}\right)(x_u-x_{w_2})=\left(d(u)-d(w_2)-\frac{1}{q-1}\right)x_u+\sum_{z\in N(u)\setminus N(w_2)}x_z.
\]
By Lemma \ref{Delta}, we have $q\ge \Delta(G)+1>d(w_2)+\frac{1}{q-1}$. So $x_u\ge x_{w_2}$.
\end{proof}

\begin{lemma}\label{edgemove}
Let $G$ be a graph with vertex $u$ such that $G[N(u)]$ consists of paths and $d(u)\ge 5$. Let $w\in V(G)\setminus N[u]$ with $N(w)=\{v_1,v_2\}$.  If $v_1$ and $v_2$ are two adjacent vertices in $N(u)$ with $N(v_i)\setminus (N[u]\cup \{w\})$=$\emptyset$ for $i=1,2$, then $q(G-v_1v_2+uw)>q(G)$.
\end{lemma}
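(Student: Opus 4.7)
The plan is to apply the Rayleigh principle with the Perron vector of $G$. Let $q=q(G)$ and let $\mathbf{x}$ be the Perron vector of $G$, and write $G'=G-v_1v_2+uw$. A direct computation of the rank-two perturbation yields
\[
\mathbf{x}^{\top}\bigl(Q(G')-Q(G)\bigr)\mathbf{x}=(x_u+x_w)^2-(x_{v_1}+x_{v_2})^2,
\]
so by the variational characterization of $q(G')$ it suffices to prove $x_u+x_w>x_{v_1}+x_{v_2}$.

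I next expand the eigenequations at the four key vertices and use the hypotheses to simplify. The equation at $w$ gives $(q-2)x_w=x_{v_1}+x_{v_2}$ (using $N(w)=\{v_1,v_2\}$), so the target inequality is equivalent to the single-variable comparison $x_u>(q-3)x_w$. The conditions that $G[N(u)]$ is a disjoint union of paths and $N(v_i)\setminus(N[u]\cup\{w\})=\emptyset$ force $N(v_i)\subseteq\{u,v_{3-i},v_i',w\}$, where $v_i'$ is the other $N(u)$-neighbor of $v_i$ along its path (present only when $v_i$ is not a path-endpoint); in particular $d(v_i)\in\{3,4\}$. Summing the equations at $v_1$ and $v_2$ and rearranging expresses $q(x_u+x_w-x_{v_1}-x_{v_2})$ purely in terms of $x_{v_1},x_{v_2},x_{v_1'},x_{v_2'}$, and the equation at $u$ then supplies the relation $(q-d(u))x_u=x_{v_1}+x_{v_2}+[x_{v_1'}]+[x_{v_2'}]+T$, where $T$ sums the Perron entries over the (at least $d(u)-4\ge 1$) ``extra'' neighbors in $N(u)\setminus\{v_1,v_2,v_1',v_2'\}$.

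Verification splits by the pair $(d(v_1),d(v_2))\in\{3,4\}^2$. In the base case $d(v_1)=d(v_2)=3$ both $v_i'$ are absent and the required inequality collapses to $q>6$, which holds strictly: $G$ contains the triangle $v_1v_2w$, so $G\not\cong K_{1,n-1}$ and Lemma~\ref{Delta} yields $q>\Delta(G)+1\ge d(u)+1\ge 6$. The remaining cases are more delicate; I would eliminate the unwanted $x_{v_i'}$ terms by substituting from the $u$-equation and absorb the errors using the slack term $T$, whose positivity is guaranteed precisely by $d(u)\ge 5$.

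The main obstacle I expect is the subcase $d(v_1)=d(v_2)=4$, where both $v_1'$ and $v_2'$ are present and their Perron entries are not directly constrained by the hypotheses, so the straight Rayleigh bound with $\mathbf{x}$ may be too tight. My fallback is to replace $\mathbf{x}$ by a perturbed test vector obtained by adding $\alpha$ to the $u$-coordinate of $\mathbf{x}$ and optimizing over $\alpha$: since the coefficient $d(u)+1-q$ is strictly negative, the resulting Rayleigh quotient is concave in $\alpha$, and the optimum weakens the target to the strictly easier sufficient condition $(x_u+x_w)^2(q-d(u))>(x_{v_1}+x_{v_2})^2(q-d(u)-1)$, which can then be pushed through using the inequality $x_u\ge\tfrac{q-2}{q-d(u)}\,x_w$ extracted from the $u$-equation together with $d(u)\ge 5$.
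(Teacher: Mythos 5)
Your opening moves coincide with the paper's: Rayleigh's principle reduces the claim to $x_u+x_w>x_{v_1}+x_{v_2}$, and your base case $d(v_1)=d(v_2)=3$ (where the inequality collapses to $q>6$, guaranteed by Lemma~\ref{Delta} since $G\ncong K_{1,n-1}$ and $d(u)\ge 5$) is exactly the paper's first case. The gap is in everything after that. The plan to ``eliminate the $x_{v_i'}$ terms by substituting from the $u$-equation and absorb the errors using the slack term $T$'' is not carried out, and no such argument can succeed, because the target inequality $x_u+x_w>x_{v_1}+x_{v_2}$ is false in the subcase $d(v_1)=d(v_2)=4$ that you flagged as the obstacle. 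Concretely, take $G=K_1\vee P_5$ (apex $u$, path $z_1z_2z_3z_4z_5$) with an extra vertex $w$ joined to $z_2$ and $z_3$; all hypotheses of the lemma hold with $v_1=z_2$, $v_2=z_3$, yet numerically $q\approx 7.43$ and, normalizing $x_u=1$, one finds $x_w\approx 0.26$ while $x_{v_1}+x_{v_2}\approx 1.38>1.26\approx x_u+x_w$. (The conclusion of the lemma still holds there, since $G-v_1v_2+uw\cong K_1\vee P_6$; only the intermediate inequality fails. You are in good company: the paper's own argument in the case where at most one of $v_1,v_2$ is pendant asserts $(q-4)(x_{v_1}+x_{v_2})<2(x_u+x_w)$, but its own displayed eigenequation gives $(q-4)(x_{v_1}+x_{v_2})=2(x_u+x_w)+x_{v_2}+x_{v_2'}$, which has the opposite sign, so that step is erroneous as written.)

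Your fallback is the right kind of idea and goes beyond the paper: the perturbed test vector $\mathbf{x}+\alpha e_u$ does, after optimizing $\alpha$, yield the weaker sufficient condition $(x_u+x_w)^2(q-d(u))>(x_{v_1}+x_{v_2})^2(q-d(u)-1)$, and that condition does hold in the counterexample above. But the tool you propose to verify it is quantitatively far too weak. From $(q-d(u))x_u=\sum_{v\in N(u)}x_v\ge x_{v_1}+x_{v_2}=(q-2)x_w$ you get only $x_u\ge\frac{q-2}{q-d(u)}x_w$, and substituting this (together with $x_{v_1}+x_{v_2}=(q-2)x_w$) reduces the target to $(2q-2-d(u))^2>(q-2)^2(q-d(u))(q-d(u)-1)$, which already fails at $q=d(u)+2$ for $d(u)\ge 5$ and fails badly as $q$ grows (the left side is of order $q^2$, the right side of order $q^4$). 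So the proposal as written does not close the argument; to finish along these lines you would need a substantially stronger lower bound on $x_u$ relative to $x_w$ (for instance one exploiting all $d(u)$ summands of the $u$-equation, not just $x_{v_1}+x_{v_2}$), or a different perturbation.
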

\begin{proof}
Let $q=q(G)$ and $\mathbf{x}$ the Perron vector of $G$. 
If $x_{u}+x_w>x_{v_1}+x_{v_2}$, then we have by Rayleigh's principle that \[
q(G-v_1v_2+uw)-q\ge \mathbf{x}^\top (Q(G-v_1v_2+uw)-Q(G))\mathbf{x}=(x_u+x_w)^2-(x_{v_1}+x_{v_2})^2>0,
\]
as desired. So it suffices to show that $x_{u}+x_w>x_{v_1}+x_{v_2}$.

Suppose first that  $v_1$ and $v_2$ are pendant vertices in $G[N(u)]$, then $P_2=v_1v_2$ is a component of $G[N(u)]$.
In this case, $d(v_1)=d(v_2)=3$ and $x_{v_1}=x_{v_2}$. By $q\mathbf{x}=Q(G)\mathbf{x}$ at $v_i$ for $i=1,2$, we have  $qx_{v_i}=3x_{v_i}+x_u+x_{v_i}$, i.e., $(q-4)x_{v_i}=x_u+x_w$, so
\[
x_{u}+x_w-(x_{v_1}+x_{v_2})=x_{u}+x_w-2x_{v_1}=(q-6)x_{v_1}.
\]
By Lemma \ref{Delta} and the fact that $G\ncong K_{1,n-1}$, we have $q> \Delta(G)+1\ge d(u)+1\ge 6$. So $x_u+x_w>x_{v_1}+x_{v_2}$.

Suppose next that at most one of $v_1$ and $v_2$ is a pendant vertex in $G[N(u)]$.
By $q\mathbf{x}=Q(G)\mathbf{x}$ at $u$, we have
$(q-d(u))x_u=\sum_{v\in N(u)}x_v$.
By $q\mathbf{x}=Q(G)\mathbf{x}$ at $v_1$ and $v_2$,
if
one of $v_1$ and $v_2$ is a pendant vertex in $G[N(u)]$, say $v_1$, then \[
q(x_{v_1}+x_{v_2})=2x_u+4(x_{v_1}+x_{v_2})+x_{v_2}+x_{v_2'}+2x_w,
\]
where $v_2'$ is the neighbor of $v_2$ different from $v_1$ in $N(u)$,
and otherwise (neither $v_1$ nor $v_2$ is a pendant vertex in $G[N(u)]$),
 \[
q(x_{v_1}+x_{v_2})=2x_u+5(x_{v_1}+x_{v_2})+x_{v_1'}+x_{v_2'}+2x_w,
\]
where $v_i'$ is the neighbor of $v_i$ different from $v_{3-i}$ in $N(u)$ for $i=1,2$.
In either case, we have \[
(q-4)(x_{v_1}+x_{v_2})<2(x_u+x_w),
\]
i.e., \[
2((x_u+x_w)-(x_{v_1}+x_{v_2}))>(q-6)(x_{v_1}+x_{v_2}).
\]
As $q>6$, we have $x_{u}+x_w>x_{v_1}+x_{v_2}$.
\end{proof}

Given a graph $H$ with  $u\in V(H)$ and positive integers $t$ and $s$, let $H(u;t,s)$ denote the graph obtained from $H$ and $K_1\vee (P_t\cup P_s)$ by identifying $u$ and the vertex of degree $t+s$ of $K_1\vee (P_t\cup P_s)$. Let  $H(u;t,0)$ denote the graph obtained from $H$ and $K_1\vee P_t$ by identifying $u$ and a  vertex of degree $t$ of $K_1\vee P_t$.

\begin{lemma}\label{edgeshift}
If $t\ge s\ge 1$, then $q(H(u;t+1,s-1))>q(H(u;t,s))$.
\end{lemma}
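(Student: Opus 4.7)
The plan is to treat the cases $s=1$ and $s\ge 2$ separately. The case $s=1$ is immediate: $H(u;t+1,0)$ is obtained from $H(u;t,1)$ by adding the single edge joining the lone vertex of the attached $P_1$ to the free endpoint of $P_t$, so Lemma~\ref{addedges} gives the inequality at once.

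For $s\ge 2$, a direct application of Lemma~\ref{perron} is not available in general: even when $H$ is trivial, in $K_1\vee(P_t\cup P_t)$ one checks that the endpoint of one copy of $P_t$ has strictly smaller Perron entry than the second-to-endpoint of the other, so the obvious switch $-w_{s-1}w_s+v_tw_s$ fails the hypothesis $x_{v_t}\ge x_{w_{s-1}}$. Instead, I would analyze the eigenvalue equation at the apex $u$ explicitly and argue by a convexity inequality. Writing $G=H(u;t,s)$, $q=q(G)$, with $\mathbf{x}$ the Perron vector and $a:=x_u$, the local Perron recurrence on the attached path $P_t=v_1\cdots v_t$ (with $y_i:=x_{v_i}$) is
\[
y_{i+1}+y_{i-1}=(q-3)y_i-a\quad(2\le i\le t-1),\qquad y_2=(q-2)y_1-a,
\]
together with the reflection symmetry $y_i=y_{t+1-i}$. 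Letting $r>1$ be the larger root of $\rho^2-(q-3)\rho+1=0$, this yields the closed form $y_i=\beta(r^{i-1}+r^{t-i})+a/(q-5)$, with $\beta$ fixed by the endpoint condition, and summing gives
\[
\phi(t,q):=\frac{1}{a}\sum_{i=1}^{t}y_i=\frac{1}{q-5}\left[t-\frac{4r(r^t-1)}{(r^2-1)(r^t+1)}\right].
\]
An analogous expression $\phi(s,q)$ holds on $P_s$, and setting $\Psi(q):=\tfrac{1}{a}\sum_{v\in N_H(u)}x_v$---a function of $q$ and the host $H$ alone, since the Perron equations on $V(H)\setminus\{u\}$ decouple from the paths---the Perron equation at $u$ becomes $q=d_H(u)+t+s+\phi(t,q)+\phi(s,q)+\Psi(q)$.

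The heart of the argument is the strict convexity of $k\mapsto\phi(k,q)$ for fixed $q>5$. Telescoping the explicit formula gives
\[
\phi(k+1,q)-\phi(k,q)=\frac{1}{q-5}\left[1-\frac{8r^{k+1}}{(r+1)(r^{k+1}+1)(r^k+1)}\right],
\]
and checking that this difference is strictly increasing in $k$ reduces to $r^{k+1}(r-1)>r-1$, which holds since $r>1$. Consequently $\phi(t+1,q)+\phi(s-1,q)>\phi(t,q)+\phi(s,q)$ for all $t\ge s\ge 1$. To conclude, set $\Phi(t,s,\lambda):=\phi(t,\lambda)+\phi(s,\lambda)+\Psi(\lambda)-\lambda+d_H(u)+t+s$, so that $q(G)$ is the unique root of $\Phi(t,s,\cdot)=0$ in the relevant range. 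A standard monotonicity check shows $\Phi(t,s,\cdot)$ is strictly decreasing in $\lambda$ (here $\phi(k,\lambda)$ is decreasing from the closed form, and $\Psi(\lambda)=\mathbf{b}^{\top}(\lambda I-M)^{-1}\mathbf{b}$ is decreasing by Cauchy interlacing, where $M$ is the principal submatrix of $Q(G)$ on $V(H)\setminus\{u\}$ and $\mathbf{b}$ is the indicator of $N_H(u)$). The convexity inequality forces $\Phi(t+1,s-1,q(G))>0$, so strict monotonicity of $\Phi(t+1,s-1,\cdot)$ yields $q(H(u;t+1,s-1))>q(G)$, as required. The main obstacle will be the convexity computation together with the monotonicity of $\Psi(\lambda)$ for a general host $H$; once these are in hand, the rest is formal.
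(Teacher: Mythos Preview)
Your argument is correct, but the paper takes a much shorter combinatorial route for $s\ge 2$. Labelling $P_t=u_1\cdots u_t$, $P_s=v_1\cdots v_s$ with Perron entries $a_i,b_j$, the paper simply looks at the signs of $a_i-b_{i+1}$ for $1\le i\le s-1$. If these are all nonnegative (resp.\ all nonpositive), the single swap $-v_1v_2+u_1v_1$ (resp.\ $-u_{s-1}u_s+u_sv_s$) produces $H(u;t+1,s-1)$ and Lemma~\ref{perron} applies directly. Otherwise there is an index $i$ where the sign flips, and the double swap $-u_iu_{i+1}-v_{i+1}v_{i+2}+u_iv_{i+2}+u_{i+1}v_{i+1}$ again yields $H(u;t+1,s-1)$; the Rayleigh quotient increment factors as $2(b_{i+2}-a_{i+1})(a_i-b_{i+1})\ge 0$, with strictness handled by a one-line eigenvector comparison. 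No closed forms, no host-side resolvent.

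Your analytic approach---solving the path recurrence explicitly, reducing to strict convexity of $k\mapsto\phi(k,q)$, and reading off the conclusion from monotonicity of $\Phi$ in $\lambda$---is heavier but perfectly sound, and it has the advantage of being systematic: the same machinery would handle more than two attached paths or weighted variants without new ideas. Two small points worth tightening: the monotonicity of $\phi(k,\lambda)$ in $\lambda$ is cleanest via the resolvent identity $\phi(k,\lambda)=\mathbf{1}^\top((\lambda-1)I-Q(P_k))^{-1}\mathbf{1}$ rather than ``from the closed form''; and the phrase ``unique root in the relevant range'' should be spelled out as $\lambda>\max\{5,\lambda(M)\}$, where both $q(G)$ and $q(H(u;t+1,s-1))$ lie by Lemma~\ref{Delta} and interlacing.
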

\begin{proof} If $s=1$, it follows from Lemma \ref{addedges}. Suppose that $s\ge 2$.
Let $G=H(u;t,s)$, $q=q(G)$ and $\mathbf{x}$ be the Perron vector of $G$ corresponding to $q$.
Labeled the vertices of $P_t$ and $P_s$ as $u_1,\dots,u_t$ and $v_1,\dots,v_s$ in order, respectively. Let $a_i=x_{u_i}$ and $b_j=x_{v_j}$ for $i=1,\dots,t$ and $j=1,\dots,s$.

Suppose first that $a_i\ge b_{i+1}$ for all $i=1,\dots,s-1$, or $a_i\le b_{i+1}$ for all $i=1,\dots,s-1$. In the former case, as $a_1\ge b_2$ and $G-v_1v_2+u_1v_1\cong H(u;t+1,s-1)$, we have by Lemma \ref{perron} that $q(H(u;t+1,s-1))> q$.
In the latter case, as $a_{s-1}\le b_s$ and $G-u_{s-1}u_s+u_sv_s\cong H(u;t+1,s-1)$, we have by Lemma \ref{perron} again that $q(H(u;t+1,s-1))> q$.

Suppose next that there is some $i=1,\dots,s-2$ such that $a_i\ge b_{i+1}$ and $a_{i+1}< b_{i+2}$, or $a_i\le b_{i+1}$ and $a_{i+1}> b_{i+2}$. As $G'=G-u_iu_{i+1}-v_{i+1}v_{i+2}+u_iv_{i+2}+u_{i+1}v_{i+1}\cong H(u;t+1,s-1)$, we have by Rayleigh's principle that
\begin{align*}
q(G')-q&\ge\mathbf{x}^\top (Q(G')-Q(G))\mathbf{x}\\
&=(a_i+b_{i+2})^2+(a_{i+1}+b_{i+1})^2-(a_i+a_{i+1})^2-(b_{i+1}+b_{i+2})^2\\
&=2(b_{i+2}-a_{i+1})(a_i-b_{i+1})\ge 0.
\end{align*}
If $q(G')=q$, then $\mathbf{x}$ is also a Perron vector of $G'$. By $q\mathbf{x}=Q(G)\mathbf{x}$ and $q(G')\mathbf{x}=Q(G')\mathbf{x}$ at $u_i$, we have 
$qa_i=3a_i+a_{i-1}+a_{i+1}+x_u$
and $qa_i=3a_i+a_{i-1}+b_{i+2}+x_u$, implying that $a_{i+1}=b_{i+2}$, a contradiction. 
So $q(H(u;t+1,s-1))>q$.
\end{proof}

\section{Proof of Theorem \ref{str}}

Given an outerplanar graph $F$, let $\mathcal{G}_{n,F}$ denote the set of all connected $F$-free outerplanar graphs of order $n$. Theorem \ref{str} states that 
for $F=C_\ell$ with $3\le \ell\le n$ or $F=tP_k$ with $k\ge 2$ and $4\le tk\le n-1$, if $G$ maximizes the $Q$-index  among all graphs in $\mathcal{G}_{n,F}$, then $G$ contains a vertex $u$ of degree $n-1$ and $G[N(u)]$ consists of paths.
We start with the special cases when $F$ is one of  $P_4$, $2K_2$ or $C_3$. Note that $K_{1,3}$ is the unique connected graph of order $4$ that is $P_4$-free, $2K_2$-free, and  $C_3$-free. So we assume that $n\ge 5$.

\begin{theorem}\label{small}
Let $F=P_4, 2K_2, C_3$. If $G$ maximizes the $Q$-index among all graphs in $\mathcal{G}_{n,F}$ with $n\ge 5$, then $G\cong K_{1,n-1}$.  
\end{theorem}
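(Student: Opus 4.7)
The plan is to handle the three choices of $F$ separately. For $F = P_4$ and $F = 2K_2$ the argument is purely structural: I will show that for $n \ge 5$ the set $\mathcal{G}_{n,F}$ consists of $K_{1,n-1}$ alone, making the conclusion immediate. The substantive case is $F = C_3$, which I will treat via Lemma \ref{qmu} and Lemma \ref{obv}(ii).

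For $F = P_4$, a spanning tree of any connected $P_4$-subgraph-free graph is itself $P_4$-subgraph-free, hence a star with some center $c$; any additional edge $uv$ of $G$ would join two leaves of this star, but then picking a third leaf $w$ makes $w, c, u, v$ span a $P_4$, so no extra edge can exist and $G$ must be $K_{1,n-1}$. For $F = 2K_2$, a connected graph is $2K_2$-free iff every two of its edges share a vertex, the classical intersecting-family condition, which forces the graph to be either a star or the triangle $K_3$ (three pairwise-intersecting edges either have a common vertex or form a triangle, and the triangle admits no further edge without producing a disjoint pair). In either case $n \ge 5$ leaves only $G \cong K_{1,n-1}$.

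For $F = C_3$, fix $G \in \mathcal{G}_{n, C_3}$ and $u \in V(G)$. Triangle-freeness gives $e(G[N(u)]) = 0$, so
\[
\eta(u) = d(u) + 1 + \frac{e(N(u), V \setminus N[u])}{d(u)}.
\]
Lemma \ref{obv}(ii) gives $|N(u) \cap N(w)| \le 2$ for every $w \in V \setminus N[u]$, hence $e(N(u), V \setminus N[u]) \le 2(n - 1 - d(u))$ and therefore
\[
\eta(u) \le d(u) - 1 + \frac{2(n-1)}{d(u)}.
\]
Setting $d = d(u)$, the quadratic $d^2 - (n+1)d + 2(n-1) = (d-2)(d - (n-1))$ is non-positive precisely on $[2, n-1]$ and strictly negative on $(2, n-1)$; for $d = 1$ one has $\eta(u) = 1 + d(v) \le n$ directly. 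Lemma \ref{qmu} then yields $q(G) \le n = q(K_{1,n-1})$, the latter equality coming from Lemma \ref{Delta}.

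It remains to upgrade this to strict inequality when $G \not\cong K_{1,n-1}$ by ruling out each potential equality case. If $d(u) = n-1$, triangle-freeness immediately gives $G \cong K_{1,n-1}$. If $d(u) = 2$, equality forces every vertex outside $N[u]$ to be adjacent to both neighbors of $u$, producing a $K_{2,n-2}$ subgraph which, for $n \ge 5$, contains $K_{2,3}$ and violates outerplanarity. If $d(u) = 1$, equality forces the unique neighbor to have degree $n-1$, again giving $G \cong K_{1,n-1}$. The only real obstacle is the $C_3$-free case; outerplanarity enters precisely through Lemma \ref{obv}(ii), which is what keeps the $\eta$-bound below $n$ for every non-star outerplanar graph.
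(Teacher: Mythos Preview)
Your proof is correct. The overall strategy matches the paper's: for $F=P_4$ and $F=2K_2$ you show (as does the paper, with a slightly different one-line argument via a max-degree vertex) that $\mathcal{G}_{n,F}=\{K_{1,n-1}\}$; for $F=C_3$ both you and the paper bound $q(G)$ via Lemma~\ref{qmu}.

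The execution of the $C_3$ case is where the two diverge. The paper splits into several degree ranges and invokes different estimates in each (for intermediate degrees it uses the global outerplanar edge bound $e(G)\le 2n-3$, while for $d(u)\in\{2,3,n-2\}$ it does ad~hoc counts). Your argument is more uniform: a single inequality $\eta(u)\le d(u)-1+\tfrac{2(n-1)}{d(u)}$ coming solely from Lemma~\ref{obv}(ii), followed by the factorisation $(d-2)(d-(n-1))\le 0$, and then a short equality analysis that dispatches $d\in\{1,2,n-1\}$ (the $d=2$ case via the forced $K_{2,n-2}\supseteq K_{2,3}$). This is cleaner and uses strictly less machinery than the paper, which additionally calls on the edge bound from Lemma~\ref{obv}. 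The trade-off is only that the paper's $d=2$ step gives the stronger bound $\eta(u)\le 2+n/2$ directly, avoiding any equality discussion there; but your route handles all degrees at once and the equality cases are easy.
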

\begin{proof}
Suppose first that $F=P_4, 2K_2$ and $G\ncong K_{1,n-1}$. Let $v$ be the vertex of maximum degree in $G$. Then $2\le d(v)\le n-2$ and there is a vertex $z\in V(G)\setminus N[v]$ adjacent to some vertex in $N(v)$, and hence $G$ contains $F$ as a subgraph, a contradiction. So $G\cong K_{1,n-1}$. Suppose in the following that $F=C_3$.
  
As $K_{1,n-1}\in \mathcal{G}_{n,F}$, we have $q(G)\ge q(K_{1,n-1})=n$.  As $G$ is $F$-free, $N(u)$ is an independent set of $G$ for any vertex $u$. It suffices to show that $\Delta(G)=n-1$. Suppose to the contrary that $\Delta(G)\le n-2$. To derive a contradiction by Lemma \ref{qmu}, it suffices to show that $\eta(u)<n$ for any $u\in V(G)$.

If $d(u)=1$, then $\eta(u)=1+d(v)<n$. 

If $d(u)=2$ with $N(u)=\{v_1,v_2\}$, then by Lemma \ref{obv} (ii), we have $d(v_1)+d(v_2)=|N(v_1)\cup N(v_2)|+|N(v_1)\cap N(v_2)|\le n-2+2=n$ and hence $\eta(u)=2+\frac{n}{2}<n$.  

If $d(u)=3$ with $n=5$ or $n=6$, then by Lemma \ref{obv} (ii), we have $\sum_{v\in N(u)}d_v=e(G)\le 5$ in the former case, and $\sum_{v\in N(u)}d_v=e(G)\le 8$ in the latter case, which follows that $\eta(u)=3+\frac{5}{3}<5$ in the former case $\eta(u)=3+\frac{8}{3}<6$ in the latter case.

If $d(u)=n-2$, then by Lemma \ref{obv} (ii), $e(G)\le n$ and so $\eta(u)\le n-2+\frac{n}{n-2}<n$.

We are left with the cases when $4\le d(u)\le n-3$ or when $d(u)=3$ with $n\ge 7$. Then $n\ge 7$ in both cases. By Lemma \ref{obv}, we have $\sum_{v\in N(u)}d(v)\le e(G)\le 2n-3$, so \[
\eta(u)\le d(u)+\frac{2n-3}{d(u)}.
\]
As the function $f(x)=x+\frac{2n-3}{x}$ is decreasing in $[0,\sqrt{2n-3}]$ and increasing in $[\sqrt{2n-3},+\infty]$, we have $\eta(u)\le \max\{f(3),f(n-3)\}<n$. 
\end{proof}

In the following, we consider $F$ as disjoint union of paths of order at least five, or cycles of order at least four. Let $\mathcal{F}$ denote the family of such these graphs, i.e., \[
\mathcal{F}=\{tP_\ell:5\le t\ell\le n-1,t\ge 1,\ell\ge 2\}\cup \{C_\ell:\ell\ge 4\}.
\]
In order to prove Theorem \ref{str}, based on Theorem \ref{small} and Lemma \ref{obv} (i), we only need to prove the following structural theorem for extremal graphs in $\mathcal{G}_{n,F}$ with $F\in \mathcal{F}$.

\begin{theorem}\label{x}
Let $F\in \mathcal{F}$. If $G$ maximizes the Q-index among all graphs in $\mathcal{G}_{n,F}$ with $n\ge 9$, then $G$ contains $K_{1,n-1}$.
\end{theorem}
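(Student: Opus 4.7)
My approach is to argue by contradiction: suppose $G \in \mathcal{G}_{n,F}$ is extremal but $\Delta(G) \le n-2$, and produce a graph $G' \in \mathcal{G}_{n,F}$ with $q(G') > q(G)$, contradicting the choice of $G$. Fix a vertex $u$ with maximum Perron entry $x_u$ (breaking ties by maximum degree), and set $W = V(G) \setminus N[u]$; by hypothesis $W \neq \emptyset$, and since $G$ is connected some vertex of $W$ has a neighbor in $N(u)$. The strategy is to locate a suitable vertex $w \in W$ and redirect an edge incident to $w$ to the vertex $u$, invoking one of the edge-moving lemmas \ref{perron}, \ref{edgemove2}, or \ref{edgemove3} to guarantee the strict increase in $q$.

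By Lemma \ref{obv}, each vertex of $W$ has at most two neighbors in $N(u)$, and such pairs of neighbors are disjoint across different vertices of $W$; combined with $e(G) \le 2n-3$, the induced subgraph $G[W]$ is sparse. I claim that one of the following configurations must be present in $G$: (i) a pendant $w \in W$ whose unique neighbor $v$ lies in $N(u)$, in which case Lemma \ref{perron} applied to the swap $G - vw + uw$ strictly increases $q$ (using $x_u \ge x_v$); (ii) a pendant $w \in W$ whose unique neighbor $w'$ lies in $W$ with $N(w') \setminus \{w\} \subseteq N(u)$, triggering Lemma \ref{edgemove3}; or (iii) a vertex $v \in N(u)$ with $d(v) \le d(u) - 2$ meeting the local conditions of Lemma \ref{edgemove2}. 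In all three cases the resulting graph $G'$ is connected, and because the moved edge is of ``pendant-like'' type the move preserves outerplanarity (no $K_4$ or $K_{2,3}$ minor appears in $G'$).

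The main obstacle is preserving $F$-freeness under the move. For $F = C_\ell$ with $\ell \ge 4$, cases (i) and (ii) leave $w$ as a pendant of $u$ after the move, so the new edge $uw$ cannot participate in any cycle; in case (iii) I must additionally verify that the specific move does not create a cycle of length exactly $\ell$ through $u$, which is done by refining the choice of $v$ and $w$ (using $\ell \ge 4$ and that $G$ itself is $C_\ell$-free). For $F = tP_\ell$, a new $P_\ell$ through the inserted edge $uw$ could arise, and one must show that any such $P_\ell$ in $G'$ corresponds via a relabeling swap to a $P_\ell$ already in $G$, so $G'$ still has at most $t-1$ disjoint copies of $P_\ell$; the bound $t\ell \le n-1$ built into $\mathcal{F}$ is used to guarantee enough ``slack'' for this swapping argument. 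The smallest forbidden graphs ($F = C_4$ and $F = tP_\ell$ with $\ell \in \{2,3\}$) are the delicate cases, since the forbidden subgraphs are cheap and must be ruled out by local analysis.
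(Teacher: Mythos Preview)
Your proposal has a genuine gap: it omits the global reduction step that makes the local edge-moving analysis feasible. The paper's proof first shows, via the bound $q(G)>n$ and Lemma~\ref{qmu} (the $\eta$-bound), that $\Delta(G)\ge n-3$; this forces $|W|\le 2$ for the maximum-degree vertex $u$, and only then does the case analysis of ``which edge to redirect'' become finite and manageable. You skip this step entirely and simply assert that one of configurations (i)--(iii) must occur. When $|W|$ is allowed to be large that trichotomy is not justified: there is no reason a priori why $W$ must contain a pendant vertex of $G$, nor why some $v\in N(u)$ must satisfy the rather restrictive local hypotheses of Lemma~\ref{edgemove2} (in particular the condition $N(z)\setminus\{v\}\subseteq N(v)\setminus N(u)$ for every $z\in N(v)\setminus N[u]$). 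Moreover, by choosing $u$ to maximize the Perron entry rather than the degree, you lose the degree inequalities $d(v)\le d(u)-2$ and $d(u)\ge d(w_2)+1$ that Lemmas~\ref{edgemove2} and~\ref{edgemove3} require, so invoking them is not automatic.

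Even after the reduction to $|W|\le 2$, the paper's argument is more delicate than your sketch suggests. When $|W|=2$ one first needs a further $\eta$-computation (Claim~\ref{n-3}) to pin down $G[N(u)]\cong P_{n-3}$, and when the unique $w\in W$ has two adjacent neighbours $v_1,v_2\in N(u)$ the correct move is $G-v_1v_2+uw$ (Lemma~\ref{edgemove}), not a swap of the form $G-vw+uw$; your list of moves does not include this. Finally, the verification of $F$-freeness after each move is not a single ``relabeling swap'' but a case-by-case construction of an explicit copy of $F$ in $G$ from a hypothetical copy in $G'$ (treating cycles and $tP_\ell$ separately, and distinguishing whether $w$ is an interior or pendant vertex of the forbidden path); your description of this part is too schematic to count as a proof.
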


\begin{proof}
As $K_1\vee (K_2\cup (n-3)K_1)\in \mathcal{G}_{n,F}$, we have by Lemma \ref{Delta} that $q(G)\ge q(K_1\vee (K_2\cup (n-3)K_1))>n$.

For $u\in V(G)$, let $W=V(G)\backslash N[u]$.

\begin{claim}\label{BC} $\eta(u)\le n$ if $d(u)\le n-4$.
\end{claim}
\begin{proof}
From Lemma \ref{obv} (i) and (ii), we have $e(N(u))\le d(u)-1$ and $e(N(u),W)\le 2|W|$.

If $3\le d(u)\le n-4$, we have
\begin{align*}
\sum_{v\in N(u)}d(v)&=d(u)+2e(N(u))+e(N(u),W)\\
&\le d(u)+2(d(u)-1)+2|W|\\
&=2n+d(u)-4.
\end{align*}
Then
\[
\eta(u)\le d(u)+1+\frac{2n-4}{d(u)}.
\]
As $h(x)=x+1+\frac{2n-4}{x}$ is decreasing in $[0,\sqrt{2n-4}]$ and increasing in $[\sqrt{2n-4},+\infty]$, we have $\eta(u)\le \max\{h(3),h(n-4)\}< n$.

If $d(u)=2$, then by Lemma \ref{obv} (ii), we have $d(v_1)+d(v_2)=|N(v_1)\cup N(v_2)|+|N(v_1)\cap N(v_2)|\le n+2$ and hence $\eta(u)=2+\frac{n+2}{2}<n$. 

If $d(u)=1$, we have $\eta(u)\le 1+\Delta(G)\le 1+n-1=n$.
\end{proof}
 
If $\Delta (G)\le n-4$, then we have by Lemma \ref{qmu} and Claim \ref{BC} that $q(G)\le n$, a contradiction. So $\Delta(G)\ge n-3$. It suffices to show that $\Delta(G)=n-1$.  Suppose to the contrary that $\Delta(G)\le n-2$. Then  $\Delta(G)=n-3, n-2$.

\noindent
{\bf Case 1.} $\Delta(G)=n-3$.

In view of Claim \ref{BC}, we assume that $u$ is  a vertex of $G$ with maximum degree.
Let $w_1$ and $w_2$ denote the two vertices in $V(G)\setminus N[u]$, i.e., $W=\{w_1,w_2\}$.

\begin{claim}\label{n-3}
$G[N(u)]\cong P_{n-3}$.
\end{claim}
\begin{proof}
By Lemma \ref{obv} (i) and (ii), each vertex $v\in V(G)\setminus \{u\}$ has at most two neighbors in $N(u)$, so $d(v)\le 5$, implying that $\eta(v)\le n$.

By Lemma \ref{obv} (ii), $e(N(u),W)\le 4$, implying that 
\[
e(W)+e(N(u),W)\le 5.
\] 
Suppose that $e(W)+e(N(u),W)=5$. Then $e(N(u),W)= 4$ and $w_1w_2\in E(G)$. By Lemma \ref{obv} (iv), $N_{N(u)}(w_1)\cap N_{N(u)}(w_2)=\emptyset$. So $G[\{u\}\cup N[w_1]\cup N[w_2]]$ contains a $K_{2,3}$-minor, a contradicting the fact that $G$ is outerplanar. It follows that $e(W)+e(N(u),W)\le 4$. Thus
\[
e(G)\le d(u)+e(N(u))+4=e(N(u))+n+1
\]
By Lemma \ref{obv} (i), $G[N(u)]$ consists of paths, so $e(N(u))\le n-4$ with equality if and only if $G[N(u)]$ is a path.  If $e(N(u))\le n-5$, then $\eta(u)\le n-3+\frac{1}{n-3}(2e(N(u))+n+1)\le n$, so we have by Claim \ref{BC} and Lemma \ref{qmu} that $q(G)\le n$, a contradiction. Thus $e(N(u))=n-4$ and $G[N(u)]\cong P_{n-3}$.
\end{proof}

By Claim \ref{n-3}, $G[N[u]]$ contains $C_\ell$ for $\ell=3,\dots,n-2$. Moreover, as $G$ is connected, at least one of $w_1,w_2$ is adjacent to some vertex of $N(u)$, implying that $G$ contains $tP_\ell$ with $t\ell\le n-1$. So it remains to show the case when $F=C_{n-1}$ or $C_n$. 

By Lemma \ref{obv} (ii), $d_{N(u)}(w_i)\le 2$ for $i=1,2$. Assume that $d_{N(u)}(w_1)\le d_{N(u)}(w_2)$. As $G$ is connected, $d_{N(u)}(w_2)\ge 1$.

Suppose first that $d_{N(u)}(w_2)=1$. Then $d_{N(u)}(w_1)\le 1$. 
Let 
\[G'=G-E(N(u),W)+uw_1+uw_2.\] 
It is obvious that $G'$ is contained in $K_1\vee (P_{n-3}\cup K_2)$, which does not contain $F$ as a subgraph. Let $v$ denote the neighbor of $w_2$ in $N(u)$. If $d_{N(u)}(w_1)=0$, then we have by Lemmas \ref{addedges} and \ref{edgemove2} that $q(G')>q(G-vw_2+uw_2)>q(G)$, a contradiction. This shows that $d_{N(u)}(w_1)=1$. Let $v'$ denote the neighbor of $w_1$ in $N(u)$. 
If $w_1w_2\notin E(G)$, then we have by Lemma \ref{addedges} and Lemma \ref{edgemove2} twice that $q(G')>q(G-vw_2+uw_2-v'w_1+uw_1)>q(G)$, also a contradiction. So $w_1w_2\in E(G)$. It then follows that either $v'=v$ or $vv'\in E(G)$, as otherwise, $G[\{u\}\cup W\cup V^*]$ contains a $K_4$-minor, where $V^*$ is the vertex set of path between $v$ and $v'$ in $G[N(u)]$, a contradiction. If $vv'\in E(G)$, then $G$ contains $C_{n-1}$ and $C_n$, contradicting that $G$ is $F$-free with $F=C_{n-1}$ or $C_n$. Thus $v'=v$. By Lemma \ref{edgemove2}, $q(G')>q(G)$, a contradiction. 

Suppose next that $d_{N(u)}(w_2)=2$. As $G[N(u)]\cong P_{n-3}$, we have by Lemma \ref{obv} (iii) that the two neighbors of $w_2$ in $N(u)$ are adjacent, implying that $G-w_1$ contains $C_{n-1}$.
Moreover, if $d_{N(u)}(w_1)=2$, then we have by by Lemma \ref{obv} (iii) that the two neighbors of $w_1$ are adjacent and by Lemma \ref{obv} (iv) that $N_{N(u)}(w_1)\cap N_{N(u)}(w_2)=\emptyset$, so $G$ contains $C_n$, a contradiction. Thus  $d_{N(u)}(w_1)\le 1$ and $F=C_n$.
As $G$ is connected, we have $1\le d(w_1)\le 2$. 
If $d(w_1)=1$ and its unique neighbor is denoted by $v_1$, then $G-v_1w_1+uw_1$ is also $F$-free, so from Lemma \ref{edgemove2} or \ref{edgemove3}, we have $q(G-v_1w_1+uw_1)>q(G)$, a contradiction. Thus $d(w_1)=2$, i.e., $w_1w_2\in E(G)$ and $w_1$ has a neighbor $v_1'$ in $N(u)$. Let $v_2$ and $v_2'$ denote the two neighbors of $w_2$ in $N(u)$. By Lemma \ref{obv}, $v_2v_2'\in E(G)$.   If $v_1'\notin \{v_2,v_2'\}$, then $G[\{u,v_1',v_2,v_2',w_1,w_2\}]$ contains $K_{2,3}$ as a minor, a contradiction. So $v_1'=v_2$ or $v_2'$, implying that $G$ contains $C_n$,  also a contradiction. 

\noindent
{\bf Case 2.} $\Delta(G)=n-2$.

Let $u$ be a vertex of maximum degree of $G$. 
Let $w$ denote the vertex that is not adjacent to $u$ in $G$. By Lemma \ref{obv} (ii), $d(w)\le 2$.

Suppose that $d(w)=1$. Let $v$ be the  neighbor of $w$. Then $v\in N(u)$. Let $G'=G-vw+uw$. 
We claim that $G'\in \mathcal{G}_{n,F}$. Otherwise, $G'$ contains $F$, then $F$ contains  $uw$, which implies that $F\cong tP_\ell$ for some $t\ge 1$ and $\ell\ge 2$ with $t\ell\le n-1$. Let $z$ be a vertex of $G$ not contained in $F$. Note that $uz\in E(G)$ and $G'[E(F)\setminus \{uw\}\cup \{uz\}]\cong F$ is a subgraph of $G$, contradicting the fact that $G$ is $F$-free. This shows that $G'\in \mathcal{G}_{n,F}$. 
It then follows from Lemma \ref{edgemove2} that $q(G')>q(G)$, a contradiction. 

Suppose now that $d(w)=2$.  The neighbors of $w$ are denoted by $v_1$ and $v_2$.

\noindent
{\bf Case 2.1.} $v_1v_2\in E(G)$. 

Let $\widehat{G}=G-v_1v_2+uw$. By Lemma \ref{edgemove}, we have $q(\widehat{G})>q(G)$. If  $\widehat{G}\in \mathcal{G}_{n,F}$, then we have a contradiction. Suppose that this is not true. Then $\widehat{G}$ contains $F$, so  $F$ must contain  $uw$.  

Suppose first that $F$ is a cycle. As $d_{F}(w)=2$, $F$ only contains exactly one of $v_1,v_2$, say $v_2$. Let $z$ be the neighbor of $u$ in $F$ different from $w$, and $z'$ be the neighbor of $z$ in $F$ different from $u$. Let $F'=F-uz-zz'-uw+uz'+wv_1+uv_1$. Then $F'\cong F$ and $G$ contains $F'$, a contradiction. So $\widehat{G}\in \mathcal{G}_{n,F}$.

Suppose next that $F=tP_\ell$ for some $t\ge 1$ and $\ell\ge 2$ with $5\le t\ell\le n-1$. If $w$ is a pendant vertex in $F$, then, with $z$ being a neighbor of $u$ not contained in $F$, $\widehat{G}[E(F)\setminus \{uw\}\cup \{uz\}]\cong F$ is a subgraph of $G$, contradicting the fact that $G$ is $F$-free. So $w$ is not a pendant vertex, implying that either $wv_1\in E(F)$ or $wv_2\in E(F)$, say $wv_2\in E(F)$. Let $F_1$ be the component of $F$ containing $uw$ and $w_1$ be the pendant vertex of $F_1$ with shorter distance to $v_2$ than that to $w$. Then $F-uw+uw_1\cong F$ is a subgraph of $G$, also a contradiction. So $\widehat{G}\in \mathcal{G}_{n,F}$.

\noindent
{\bf Case 2.2.} $v_1v_2\notin E(G)$. 

By Lemma \ref{obv} (iii), $v_1$ and $v_2$ are pendant vertices and in different components in $G[N(u)]$. Let $V_i$ denote the set of vertices of the component containing $v_i$ in $G[N(u)]$ for $i=1,2$. 
Let $G^*=G-v_1w+uw$. By Lemma \ref{edgemove2}, we have $q(G^*)>q(G)$. To derive a contradiction, we need to show that $G^*\in \mathcal{G}_{n,F}$. Suppose to the contrary that $G^*$ contains $F$, so $F$ must contain the edge $uw$.  

Suppose first that $F$ is a cycle. As $\{u,w\}$ is a cut set in both $G$ and $G^*$, and $v_1w\notin E(G^*)$, $F$ only contains vertices in $V_2$, except $u$ and $w$. Let $z$ be the neighbor of $u$ in $F$ different from $w$, and $z'$ be the neighbor of $z$ in $F$ different from $u$. Let $F'=F-uz-zz'-uw+uz'+wv_1+uv_1$. Then $F'\cong F$ and $G$ contains $F'$, a contradiction. So $G^*\in \mathcal{G}_{n,F}$.

Suppose next that $F=tP_\ell$ for some $t\ge 1$ and $\ell\ge 2$ with $5\le t\ell\le n-1$. If $w$ is a pendant vertex in $F$, then, with $z$ being a neighbor of $u$ not contained in $F$, $G^*[E(F)\setminus \{uw\}\cup \{uz\}]\cong F$ is a subgraph of $G$, contradicting the fact that $G$ is $F$-free. So $w$ is not a pendant vertex, i.e., $uw,v_2w\in E(F)$. Let $F_1$ denote the component of $F$ containing $w$. Let $w_1$ denote the pendant vertex of $F_1$ in $V_2$. Then $F-uw+uw_1\cong F$ is a subgraph of $G$, also a contradiction. This shows that $G^*\in \mathcal{G}_{n,F}$, as desired.
\end{proof}

Theorem \ref{str} follows from the two previous theorems.

\section{Proof of Theorems \ref{cycle} and \ref{path}}

Now, we are ready to prove Theorems \ref{cycle} and \ref{path}.

\begin{proof}[Proof of Theorem \ref{cycle}]
Suppose that $G$ maximizes the $Q$-index among all connected outerplanar $C_\ell$-free graphs of order $n$.

If $\ell=3$, then by Theorem \ref{small}, $G\cong K_{1,n-1}=K_1\vee (n-1)K_1$, as desired. 

Suppose in the following that $\ell\ge 4$. 

By Theorem \ref{x}, $\Delta(G)=n-1$. Let $u$ be the vertex with degree $n-1$.
As $G$ is $C_{\ell}$-free, $G[N(u)]$ is $P_{\ell-1}$-free. By Lemma \ref{obv} (i),
$G[N(u)]$ consists of paths of order at most $\ell-2$. Note that $r=n-1-\alpha (\ell-2)$. 
By Lemma \ref{edgeshift} repeatedly, $G[N(u)]\cong \alpha P_{\ell-2}\cup P_r$, so 
$G\cong K_1\vee(\alpha P_{\ell-2}\cup P_r)$.
\end{proof}

\begin{proof}[Proof of Theorem \ref{path}]
Suppose that $G$ maximizes the Q-index among all connected outerplanar $tP_\ell$-free graphs.

If $t=1$ and $\ell=4$ or $t=2$ and $\ell=2$, then by Theorem \ref{small}, $G\cong K_{1,n-1}=K_1\vee (n-1)K_1$, as desired.

Suppose in the following that $t\ell\ge 5$. From the lower bound of $n$, we have $t\ell \le n-1$.

By Theorem \ref{x}, $\Delta(G)=n-1$. Let $u$ be the vertex with degree $n-1$. By Lemma \ref{obv} (i), $G[N(u)]$ is a disjoint union of paths. Assume that $G[N(u)]\cong \cup_{i=1}^sP_{a_i}$ and $a_1\ge \dots \ge a_s$.
As $G$ is $tP_{\ell}$-free, $a_1+a_2\le t\ell-2$. By Lemma \ref{edgeshift}, $a_1+a_2=t\ell-2$ and $a_2=\dots =a_{s-1}$.

Let $q=q(G)$ and $\mathbf{x}$ be the Perron vector of $G$ with $x_u=1$.

\begin{claim}\label{vector}
For any $v\in N(u)$, $\frac{1}{q}<x_v<\frac{1}{q}+\frac{30}{q^2}$.
\end{claim}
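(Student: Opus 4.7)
The plan is to exploit the eigenvalue equation $Q(G)\mathbf{x} = q\mathbf{x}$ at each $v \in N(u)$. Since $u$ has degree $n-1$, every vertex of $G$ other than $u$ lies in $N(u)$, and by Lemma \ref{obv} (i) each $v \in N(u)$ has at most two neighbors inside $G[N(u)]$, giving $1 \le d(v) \le 3$. Using $x_u = 1$, the eigenvalue equation at $v$ becomes
\[
(q - d(v))\, x_v \;=\; 1 + \sum_{w \in N(v) \cap N(u)} x_w.
\]
The lower bound $x_v > \frac{1}{q}$ is then immediate: the right-hand side is at least $1$, so $x_v \ge \frac{1}{q - d(v)} \ge \frac{1}{q - 1} > \frac{1}{q}$ because $d(v) \ge 1$.

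For the upper bound I will apply the standard ``max trick''. Let $M = \max_{v \in N(u)} x_v$. Bounding each summand by $M$ (there are at most two) and using $d(v) \le 3$ gives $x_v \le \frac{1 + 2M}{q - 3}$ for every $v \in N(u)$; a small case check covers $d(v)=1,2$ since then $q - d(v) > q - 3$ already makes the stated bound comfortable. Taking the maximum over $v$ yields $(q - 5) M \le 1$, hence
\[
x_v \;\le\; M \;\le\; \frac{1}{q - 5} \;=\; \frac{1}{q} + \frac{5}{q(q - 5)}.
\]
A direct comparison shows $\frac{5}{q(q - 5)} < \frac{30}{q^2}$ whenever $q > 6$, so it only remains to observe that $q \ge q(K_{1,n-1}) = n$ and that the hypothesis of Theorem \ref{path} makes $n$ (and hence $q$) far larger than $6$.

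The main (and essentially only) obstacle is cosmetic: one has to check that the crude upper bound $\frac{1}{q - 5}$ indeed fits inside the target $\frac{1}{q} + \frac{30}{q^2}$. The constant $30$ in the statement is clearly a deliberately loose buffer chosen to simplify later estimates rather than to reflect a sharp bound, so no delicate analysis is needed here.
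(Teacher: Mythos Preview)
Your argument is correct and follows essentially the same approach as the paper: both use the eigenvalue equation $qx_v = d(v)x_v + 1 + \sum_{w\in N(v)\setminus\{u\}} x_w$ together with $d(v)\le 3$ to squeeze $x_v$. The only cosmetic difference is that the paper first proves $x_z<1$ for all $z\in N(u)$, then bootstraps once to $x_v<6/q$ and once more to $x_v<\tfrac{1}{q}+\tfrac{30}{q^2}$, whereas your ``max trick'' solves $(q-5)M\le 1$ in one step and gives the slightly sharper $M\le \tfrac{1}{q-5}$; either way the constant $30$ is comfortably met once $q>6$.
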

\begin{proof}
By $q\mathbf{x}=Q(G)\mathbf{x}$ at $u$ and any $z\in N(u)$, we have \[
(q-d(z))(x_u-x_z)=(n-2-d(z))x_u+\sum_{w\in N(u)\setminus N(z)}x_w>0,
\]
i.e., $x_z<x_u=1$.
Note that $d(v)\le 3$.
By $q\mathbf{x}=Q(G)\mathbf{x}$ at $v$, we have
\begin{equation}\label{xv}
qx_v=d(v)x_v+x_u+\sum_{z\in N(v)\setminus\{u\}}x_z.
\end{equation}
Recall that $x_z<1$, which implies that $x_v<\frac{6}{q}$.
From Eq.~\eqref{xv} again, we have $qx_v<1+\frac{30}{q}$, implying that $x_v<\frac{1}{q}+\frac{30}{q^2}$.
On the other hand, by Eq.~\eqref{xv}, we have $x_v>\frac{1}{q}$.
\end{proof}

We'll show that $a_2=\lfloor \frac{\ell-1}{2}\rfloor$ if $t=1$, and $a_2=\ell-1$ if $t\ge 2$. Suppose to the contrary that $a_2\le \lfloor \frac{\ell-3}{2}\rfloor$ if $t=1$, and $a_2\le \ell-2$ if $t\ge 2$. In the former case, we have  \[
s=\left\lceil\frac{n-1-a_1}{a_2}\right\rceil+1=\left\lceil\frac{n-\ell+1}{a_2}\right\rceil+2\ge \left\lceil\frac{n-\ell+1}{\lfloor\frac{\ell-3}{2}\rfloor}\right\rceil+2\ge \left\lfloor\frac{\ell-3}{2}\right\rfloor+3\ge a_2+3.
\]
In the latter case, we have \[
s=\left\lceil \frac{n-1-a_1}{a_2} \right\rceil+1=\left\lceil \frac{n-t\ell+1}{a_2} \right\rceil+2\ge\frac{n-t\ell+1}{\ell-2}+2\ge \ell+1 \ge a_2+3.
\]
This shows that $s\ge a_2+3$ in either case.
Let $v_i$ denote an end vertex of the path $P_{a_i}$ for $i=1,\dots,a_2+2$ and $v_1'$ denote the neighbor of $v_1$ in $P_{a_1}$. Let $P_{a_{s-1}}=w_1\dots w_{a_2}$ and
\[
G'=G-v_1v_1'-\sum_{i=1}^{a_2-1}w_iw_{i+1}+v_1v_2+\sum_{i=3}^{a_2+2}v_iw_{i-2}.
\]
This implies that $G'$ is obtained from $G$ by deleting $a_2$ edges and adding $a_2+1$ edges.
By Claim \ref{vector}, we have
\[
\mathbf{x}^\top (Q(G')-Q(G))\mathbf{x}\ge 4(a_2+1)\frac{1}{q^2}-4a_2\left(\frac{1}{q}+\frac{30}{q^2}\right)^2=\frac{4}{q^4}\left(q^2-60a_2q-900a_2\right).
\]
As $n\ge 30\lfloor\frac{\ell-3}{2} \rfloor+30\sqrt{\lfloor\frac{\ell-3}{2} \rfloor \big (\lfloor\frac{\ell-3}{2} \rfloor+1\big )}$ if $t=1$ and $n\ge 30(\ell-2)+30\sqrt{(\ell-2)(\ell-1)}$ if $t\ge 2$, we have $n\ge 30a_2+30\sqrt{a_2(a_2+1)}$ and so $q>n\ge 30a_2+30\sqrt{a_2(a_2+1)}$, implying that $\mathbf{x}^\top (Q(G')-Q(G))\mathbf{x}>0$. So by Rayleigh's principle, $q(G')>q$, a contradiction. Thus $a_2=\lfloor \frac{\ell-1}{2}\rfloor$ if $t=1$, and $a_2=\ell -1$ if $t\ge 2$.

Therefore, $a_1=\lceil\frac{\ell-1}{2} \rceil$ and $G\cong K_1\vee \big(P_{\lceil\frac{\ell-1}{2} \rceil}\cup \alpha P_{\lfloor\frac{\ell-1}{2} \rfloor}\cup P_r\big)$ if $t=1$, and $a_1=t\ell-\ell-1$ and $G\cong K_1\vee (P_{t\ell-\ell-1}\cup \alpha P_{\ell-1}\cup P_r)$ if $t\ge 2$.
\end{proof}

\bigskip

\noindent
{\bf Acknowledgement} \ The research is supported by the National Natural Science Foundation of China (No.~12571364).

\bigskip

\noindent
{\bf Conflict of Interest} \
The authors declare that there is no conflict of interest.

\bigskip

\noindent
{\bf Data availability} \ 
No data was used for the research described in the article.

\end{document}